\documentclass[12pt]{article}%
\usepackage{amsmath}
\usepackage{amsfonts}
\usepackage{amssymb}
\usepackage[all]{xy}
\usepackage{graphicx}
\usepackage{hyperref}%
\providecommand{\U}[1]{\protect\rule{.1in}{.1in}}
\newtheorem{theorem}{Theorem}[section]

\newtheorem{definition}[theorem]{Definition}
\newtheorem{example}[theorem]{Example}

\newtheorem{lemma}[theorem]{Lemma}

\newtheorem{proposition}[theorem]{Proposition}
\newtheorem{remark}[theorem]{Remark}

\newenvironment{proof}[1][Proof]{\noindent\textbf{#1.} }{\ \rule{0.5em}{0.5em}}
\begin{document}

\title{Arens Regularity of the Fremlin Projective Tensor Product of Banach Lattice Algebras}
\author{J.Jaber, T. Marwa\\{\small Research Laboratory of Algebra, Topology, Arithmetic, and Order}\\{\small Department of Mathematics}\\{\small Faculty of Mathematical, Physical and Natural Sciences of Tunis}\\{\small Tunis-El Manar University, 2092-El Manar, Tunisia}}
\date{}
\maketitle

\begin{center}
\bigskip Abstract
\end{center}

\bigskip We study the Arens regularity of the Fremlin projective tensor
product of Banach lattice algebras. We first introduce and investigate
$L$-weakly almost periodic functionals, a class of functionals defined in
terms of $L$-weak compactness of their orbits. We obtain a characterization of
these functionals in terms of bounded disjoint sequences and use it to analyze
the Fremlin projective tensor product. In particular, we prove that, for
nontrivial Banach lattice algebras $A$ and $B$, Arens regularity of
$A\otimes_{\left\vert \pi\right\vert }B$ implies Arens regularity of both $A$
and $B$. The converse, however, fails: we construct two Arens regular Banach
lattice algebras whose Fremlin projective tensor product is not Arens regular.
We also establish several sufficient conditions for the Arens regularity of
$A\otimes_{\left\vert \pi\right\vert }B$, including conditions involving the
positive Schur property and order continuity of the norm on the space of
regular operators. Finally, we show that when both factors are Banach
$f$-algebras, their Fremlin projective tensor product is Arens regular.

\textbf{2000 Mathematics Subject Classification.} 06F25,46E25

\textbf{Key words and phrases.} Banach lattice algebra, Arens regularity,
Weakly almost periodicity.

\section{Introduction}

Let $A$ be a Banach algebra. It is well known that the second dual
$A^{\prime\prime}$ admits two natural extensions of the product, called the
first and second Arens products. The algebra $A$ is said to be Arens regular
if these two products coincide on $A^{\prime\prime}$. Arens regularity is
therefore a natural way of measuring whether the multiplication of a Banach
algebra admits a unique extension to its bidual.

A fundamental characterization, going back to the work of Arens and
Grothendieck, states that a Banach algebra $A$ is Arens regular if and only if
every continuous linear functional on $A$ is weakly almost periodic (wap).
More precisely, for $f\in A^{\prime}$, consider its orbit $O_{f}=\left\{
f.x:\text{ }\left\Vert x\right\Vert \leq1\right\}  $, where $f.x$ is defined
by $f.x:y\longmapsto f(xy)$. The functional $f$ is said to be weakly almost
periodic if $O_{f}$ is relatively weakly compact. Grothendieck's double limit
criterion ensures that the wap functionals, denoted by $\mathrm{wap}(A)$ form
the largest subspace of $A^{\prime}$ on which the two Arens products agree.
Consequently, Arens regularity is equivalent to the identity $\mathrm{wap}%
(A)=A^{\prime}$.

This characterization highlights a deep connection between the algebraic
structure of $A$ and the weak compactness properties of subsets of its dual.

In this paper we investigate Arens regularity in the setting of Banach lattice
algebras, with particular emphasis on the Fremlin projective tensor product.
If $A$ and $B$ are Banach lattice algebras, then their Fremlin projective
tensor product $A\otimes_{\left\vert \pi\right\vert }B$ is again a Banach
lattice algebra, with multiplication on elementary tensors given by
\[
(a\otimes b)(a^{\prime}\otimes b^{\prime})=aa^{\prime}\otimes bb^{\prime
}\text{.}%
\]
The Fremlin tensor product is therefore a natural construction that
simultaneously reflects the algebraic and lattice structures of the two
factors. A natural question is whether Arens regularity is preserved by this
tensor product. More precisely, one may ask whether $A$ and $B$ Arens regular
is equivalent to $A\otimes_{\left\vert \pi\right\vert }B$ is Arens regular.

Our first main result shows that, for nontrivial Banach lattice algebras,
Arens regularity of the tensor product forces Arens regularity of each factor.
The converse is false: we construct two Arens regular Banach lattice algebras
$A$ and $B$ for which $A\otimes_{\left\vert \pi\right\vert }B$ fails to be
Arens regular.

To prove these results, weak compactness can be refined using disjointness
techniques. Motivated by this, we introduce a new class of functionals, called
$L$- weakly almost periodic functionals ($L$-wap). If $A$ is a Banach lattice
algebra and $f\in A^{\prime}$, we call $f$ $L$-weakly almost periodic when its
orbit $O_{f}$ is $L$-weakly compact. We denote the corresponding space by
$\mathcal{L}$-\textrm{wap}$(A)$. Since $\mathcal{L}$-\textrm{wap}%
$(A)\subseteq$\textrm{wap}$(A)$, thus the equality $\mathcal{L}$%
-\textrm{wap}$(A)=A^{\prime}$ is a sufficient condition for Arens regularity.

Our first purpose is to develop a criterion for $L$-weakly almost periodicity
that is particularly suited to tensor products. We show that, for $f\in
A^{\prime}$, $L$-weak almost periodicity is equivalent to the vanishing of
$f.x_{n}$ in norm for every norm-bounded disjoint sequence $(x_{n})$ in $A$.
These characterizations allow the order structure to enter directly into the
analysis of the orbits of functionals. We then apply this framework to the
Fremlin projective tensor product. Finally, we establish several sufficient
conditions for the Arens regularity of the Fremlin projective tensor product.
In particular, we show that if both dual spaces $A^{\prime}$ and $B^{\prime}$
have the positive Schur property, then $A\otimes_{\left\vert \pi\right\vert
}B$ is Arens regular. We also obtain Arens regularity when the space of
regular operators $L^{r}(A,B)$ has an order continuous norm.

The paper is organized as follows. In Section 2, we recall the necessary
background on Banach lattice algebras and Arens regularity and develop the
theory of $L$-weakly almost periodic functionals. We establish
characterizations in terms of disjoint sequences and study their stability
properties, together with consequences arising from order continuity and the
positive Schur property. In Section 3, we apply these results to the Fremlin
projective tensor product. We characterize $L$-weakly almost periodic
functionals on the tensor product, prove that Arens regularity of the tensor
product implies Arens regularity of its factors, present the counterexample to
the converse implication, and derive several sufficient conditions for Arens regularity.

\section{Preliminaries}

\subsection{Banach lattice algebras}

We briefly recall some standard notions on Banach lattices and Banach lattice
algebras. For general background, we refer to \cite{Aliprantis}, \cite{Meyer}
and \cite{Wnuk}.

Let $X$ be a Banach lattice. A sequence $(x_{n})\subset$ $X$ is said to be
disjoint if $\left\vert x_{n}\right\vert \wedge\left\vert x_{m}\right\vert =0$
whenever $n\neq m$. A subset $D\subset X$ is called $L$-weakly compact if
every disjoint sequence in the solid hull
\[
S(D)=\left\{  x\in X:\left\vert x\right\vert \leq\left\vert y\right\vert
\text{ for some }y\in D\right\}
\]
satisfies $\left\Vert x_{n}\right\Vert \longrightarrow0$.

A Banach lattice $A$ equipped with a continuous product is called a Banach
lattice algebra if the product of positive elements is positive. If, in
addition, $A$ satisfies the following property
\[
f\wedge g=0\text{ and }0\leq h\in A\text{ imply }fh\wedge g=hf\wedge
g=0\text{,}%
\]
then $A$ is called a Banach $f$-algebra. Many classical algebras of functions
provide examples of semiprime Banach $f$-algebras.

We shall also use the space $\mathcal{L}^{r}(A,B)$ the linear span of positive
operators from $A$ into $B$, called the space of regular operators. When $B$
is Dedekind complete, $\mathcal{L}^{r}(A,B)$ is always a Banach lattice with
respect to the "r-norm" defined by
\[
\left\Vert T\right\Vert _{r}=\left\Vert \left\vert T\right\vert \right\Vert
=\sup\left\{  \left\Vert \left\vert T\right\vert (x)\right\Vert :x\geq0\text{
and }\left\Vert x\right\Vert \leq1\right\}  \text{.}%
\]

\subsection{Arens regularity}

let $A$ be a Banach algebra, and denote by $A^{\prime}$ and $A^{\prime\prime}$
its dual and bidual, respectively. For $x\in A$, $f\in A^{\prime}$ and $F,G\in
A^{\prime\prime}$, we define the following linear functionals on $A$.%

\begin{align*}
f.x  &  :y\longmapsto f(xy)\text{ \ \ \ \ \ \ \ \ \ \ }f\circ x:y\longmapsto
f(yx)\text{\ \ \ \ \ \ \ \ \ \ \ \ \ \ \ \ \ \ \ \ }\\
F.f  &  :y\longmapsto F(f.y)\text{ \ \ \ \ \ \ \ }F\circ f:y\longmapsto
F(f\circ y)\text{\ \ \ \ \ \ \ \ \ \ \ \ \ \ \ \ }%
\end{align*}

The first and the second Arens products on $A^{\prime\prime}$ are defined by
\begin{equation}
F\bullet G:f\longmapsto F(G.f)\text{ and }F\circ G:f\longmapsto G(F\circ
f)\text{ for all }F,G\in A^{\prime\prime}\text{.} \label{1}%
\end{equation}

These are known as the first and second Arens products, respectively. Each of
them makes $A^{\prime\prime}$ into a Banach algebra, and $A$ embeds
isometrically into $A^{\prime\prime}$ via the canonical homomorphism
$a\longrightarrow a^{\prime\prime}$, where $a^{\prime\prime}(f)=f(a)$ for all
$f\in A^{\prime}$. \ 

The algebra $A$ is said to be Arens regular if the two Arens products on its
second dual $A^{\prime\prime}$ coincide, that is $F\bullet G=F\circ G$ for all
$F,G\in A^{\prime\prime}$.

For $f\in A^{\prime}$, the orbit of $f$ is $O_{f}=\left\{  f.x:\left\Vert
x\right\Vert \leq1\right\}  $.

We say that $f$ is a weakly almost periodic, or wap, if $O_{f}$ is relatively
weakly compact subset in $A^{\prime}$. We denote by \textrm{$wap$}$(A)$ the
set of all such functionals. By Grothendieck's double limit criterion, $f\in
A^{\prime}$ belongs to \textrm{$wap$}$(A)$ if and only if, for every pair of
sequences $(x_{n})_{n}$ and $(y_{n})_{n}$ in the closed unit ball $B_{A}$ of
$A$, we have
\[
\lim_{n}\lim_{m}f(x_{n}y_{m})=\lim_{m}\lim_{n}f(x_{n}y_{m})
\]
provided that both limits exist.

The two Arens products on $A^{\prime\prime}$ coincide, if and only if
\textrm{$wap$}$(A)=A^{\prime}$. Thus $A$ is Arens regular if, and only if the
double the double limit condition above holds for all $f\in A^{\prime}$ (see
\cite[Theorem 1.4.11]{Palmer}).

\subsection{$L$-weakly almost periodic functionals}

Let $A$ be a Banach lattice algebra then its bidual $A^{\prime\prime}$,
equipped with either Arens product, is again a Banach lattice algebra. The
canonical embedding of $A$ into $A^{\prime\prime}$ preserves both the
algebraic and the lattice structures.

The notion of weakly almost periodicity is based on weak compacteness of the
orbit of \ a functional. To use the disjointness properties of Banach
lattices, we introduce the following stronger form of weak almost periodicity.

Let $A$ be a Banach lattice algebra and $f\in A^{\prime}$. Let $f\in
A^{\prime}$, We say that $f$ is $L$-weakly almost periodic or $L$-wap if
$O_{f}$ is an $L$-weakly compact subset of $A^{\prime}$. We write
$\mathcal{L}$-\textrm{$wap$}$\mathrm{(}A)$ for the set of all $L$-wap functionals.

Since, every $L$-weakly compact subset is weakly compact (see
\cite[Proposition 3.6.5]{Meyer}),
\[
\mathcal{L}\text{-}\mathrm{wap(}A)\subseteq\mathrm{wap(}A)\subseteq A^{\prime
}\text{.}%
\]
Consequently, $\mathcal{L}$-$\mathrm{wap(}A)=A^{\prime}$ implies that $A$ is
Arens regular.

The following characterization will be used repeatedly, it can be viewed as an
analogue to Grothendieck's criterion for weakly almost periodicity in Banach
spaces, offering a practical tool for verifying $L$-wap property.

\begin{proposition}
\label{P1}Let $A$ be a Banach lattice algebra and $f\in A^{\prime}$. The
following are equivalent.

\begin{enumerate}
\item[(1)] $f\in\mathcal{L}$-\textrm{$wap$}$\mathrm{(}A)$.

\item[(2)] For every norm bounded disjoint sequence $(x_{n})$, one has
$\left\Vert f\circ x_{n}\right\Vert \longrightarrow0$ in $A^{\prime}$.

\item[(3)] For every norm bounded disjoint sequence $(x_{n})_{n\in\mathbb{N}}$
and every positive sequence $(y_{n})_{n\in\mathbb{N}}$ in $B_{A}$, we have
\[
\lim\limits_{n\longrightarrow\infty}f(y_{n}x_{n})=0\text{.}%
\]

\end{enumerate}
\end{proposition}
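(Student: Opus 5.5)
The plan is to reduce (1)$\Leftrightarrow$(2) to the classical characterization of $L$-weakly compact subsets of a dual Banach lattice, and then to obtain (2)$\Leftrightarrow$(3) by splitting into positive and negative parts.

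\textbf{The dual characterization.} Let $E$ be a Banach lattice and $D\subseteq E^{\prime}$ a norm bounded set. Then $D$ is $L$-weakly compact if and only if every disjoint sequence $(x_{n})$ in $B_{E}$ converges to $0$ uniformly on $D$, that is,
\[
\sup_{g\in D}\left\vert g(x_{n})\right\vert \longrightarrow0 .
\]
This is the dual description of $L$-weak compactness in \cite{Meyer}, and I would quote it. If a self-contained argument is wanted, the direction ``uniform convergence $\Rightarrow$ $L$-weakly compact'' is the delicate one. I would argue by contradiction. Suppose there is a disjoint sequence $(g_{n})$ in the solid hull $S(D)$ with $\left\Vert g_{n}\right\Vert \geq\varepsilon$. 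Using $\left\vert g_{n}\right\vert \leq\left\vert h_{n}\right\vert$ with $h_{n}\in D$, and the Riesz--Kantorovich formula for $\left\vert h_{n}\right\vert$, one produces positive $u_{n}\in B_{E}$ on which the $h_{n}$ act with size at least $\varepsilon/2$. A standard disjointification of the $u_{n}$ along a subsequence, exploiting the disjointness of the $g_{n}$ (the usual Meyer-Nieberg lemma), then yields a disjoint sequence in $B_{E}$ that does not converge uniformly on $D$. I expect this to be the main obstacle, which is why I intend to cite the result rather than redo it.

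\textbf{(1)$\Leftrightarrow$(2).} Apply the characterization with $E=A$ and $D=O_{f}$, which is norm bounded by $\left\Vert f\right\Vert$ times the norm of the product. For $g=f.y\in O_{f}$ one has $g(x_{n})=f(yx_{n})$. Hence
\[
\sup_{g\in O_{f}}\left\vert g(x_{n})\right\vert =\sup_{\left\Vert y\right\Vert \leq1}\left\vert f(yx_{n})\right\vert =\left\Vert f\circ x_{n}\right\Vert .
\]
So $O_{f}$ is $L$-weakly compact exactly when $\left\Vert f\circ x_{n}\right\Vert \rightarrow0$ for every disjoint sequence in $B_{A}$. Scaling extends this to every norm bounded disjoint sequence, which gives (1)$\Leftrightarrow$(2).

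\textbf{(2)$\Rightarrow$(3).} This is immediate, since $\left\vert f(y_{n}x_{n})\right\vert \leq\left\Vert f\circ x_{n}\right\Vert$ for $y_{n}\in B_{A}$.

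\textbf{(3)$\Rightarrow$(2).} Suppose (2) fails. Then for some bounded disjoint $(x_{n})$ and some $\varepsilon>0$ there are, along a subsequence, elements $y_{n}\in B_{A}$ with $\left\vert f(y_{n}x_{n})\right\vert \geq\varepsilon$. A subsequence of a disjoint sequence is still disjoint, so (3) applies to it. Write $y_{n}=y_{n}^{+}-y_{n}^{-}$; since $\left\vert y_{n}^{\pm}\right\vert \leq\left\vert y_{n}\right\vert$, both $(y_{n}^{+})$ and $(y_{n}^{-})$ are positive sequences in $B_{A}$. By (3) applied to each of them,
\[
f(y_{n}x_{n})=f(y_{n}^{+}x_{n})-f(y_{n}^{-}x_{n})\longrightarrow0,
\]
which is a contradiction. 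Note that no positivity of $f$ and no decomposition of the $x_{n}$ is needed here, since (3) is stated for arbitrary bounded disjoint $(x_{n})$.
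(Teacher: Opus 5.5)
Your proof is correct and has the same structure as the paper's. In (1)$\Leftrightarrow$(2) both of you invoke the standard dual characterization of $L$-weakly compact subsets of $A^{\prime}$; your identity $\sup_{g\in O_f}|g(x_n)|=\|f\circ x_n\|$ is exactly what makes it apply. In both, (2)$\Rightarrow$(3) is trivial.

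The only real difference is in (3)$\Rightarrow$(2). The paper passes to the modulus $|f\circ x_n|$ and picks $0\le y_n\in B_A$ with $|f\circ x_n|(y_n)\ge\varepsilon/2$. It then uses a Riesz--Kantorovich argument to find $0\le z_n\le y_n$ with $|f(z_nx_n)|$ bounded below. You instead split $y_n=y_n^{+}-y_n^{-}$ and apply (3) to each part.

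Your route is shorter and never needs the modulus of $f\circ x_n$. It also sidesteps a small slip in the paper: the displayed identity $|g|(y)=\sup\{|g(z)|:0\le z\le y\}$ only holds up to a factor $2$. In general one has only $|g|(y)\le 2\sup\{|g(z)|:0\le z\le y\}$, which still suffices there, though with a slightly worse constant.
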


\begin{proof}
(1)$\Leftrightarrow$(2) follows from the standard characterization of
$L$-weakly compact subsets of a Banach lattice, see \cite[Theorem
5.63]{Aliprantis} or \cite[Proposition 3.6.3]{Meyer}.

(2)$\Longrightarrow$(3) This implication is straightforward.

(3)$\Longrightarrow$(2) Suppose, for contradiction, that there exists a
disjoint sequence $(x_{n})_{n\in\mathbb{N}}\subset$ $B_{A}$ such that
$(\left\Vert f\circ x_{n}\right\Vert )_{n\in\mathbb{N}}$ does not converges to
zero. Passing to a subsequence if necessary, we may assume that there exists
some $\varepsilon>0$ such that $\left\Vert f\circ x_{n}\right\Vert
\geq\varepsilon$ for all $n\in\mathbb{N}$. Then, for each $n\in\mathbb{N}$, we
can choose $0\leq y_{n}\in B_{A}$ such that $\left\vert f\circ x_{n}%
\right\vert (y_{n})\geq\dfrac{\varepsilon}{2}.$ Using the classical
Kantorovich formula
\[
\frac{\varepsilon}{2}\leq\left\vert f\circ x_{n}\right\vert (y_{n}%
)=\sup\left\{  \left\vert f\circ x_{n}(z)\right\vert :0\leq z\leq
y_{n}\right\}  \text{ for all }n\in\mathbb{N}\text{,}%
\]
there exists $0\leq z_{n}\leq y_{n}$ such that $\left\vert f\circ x_{n}%
(z_{n})\right\vert =\left\vert f(z_{n}x_{n})\right\vert \geq\dfrac
{\varepsilon}{4}$. This contradicts (3).
\end{proof}

\begin{example}
\label{E}\ \ \ \ \ \ \ \ \ \ \ \ \ \ \ \ \ \ \ \ \ \ \ \ \ \ \ \ \ \ \ \ \ \ \ \ \ 

\begin{enumerate}
\item Let $A=\ell^{1}(\mathbb{Z)}$ be the Banach lattice algebra with
pointwise multiplication
\[
x.y(n)=x_{n}y_{n}\text{ for all }x,y\in A\text{ and }n\in\mathbb{N}\text{.}%
\]
Then, $\mathcal{L}$-\textrm{$wap$}$(A)=c_{0}$.

If $\ell^{1}(\mathbb{Z)}$ is furnished with convolution product defined by
\[
(x.y)(n)=%
{\displaystyle\sum\limits_{k\in\mathbb{Z}}}
x(k)y(n-k)\text{ for all }x,y\in A\text{ and }n\in\mathbb{Z}\text{,}%
\]
Then $\mathcal{L}$-\textrm{$wap$}$(A)=\left\{  0\right\}  $.

\item $(\ell^{1}$-direct sum of Banach lattice algebras).

Let $(A_{n})_{n}$ be a sequence of Banach lattice algebras and consider their
\ $\ell^{1}$-direct sum
\[
A=\bigoplus\limits_{n=1}^{\infty}A_{n}=\left\{  x=(x_{n})_{n}:x_{n}\in
A_{n}\text{ and }\left\Vert x\right\Vert =\sum\limits_{n=1}^{\infty}\left\Vert
x_{n}\right\Vert <+\infty\right\}  \text{,}%
\]
With coordinatewise product
\[
x.y=(x_{1}y_{1},x_{2}y_{2},..)\text{ for all }x=(x_{n})\text{ }y=(y_{n})\in
A\text{.}%
\]
Then $A$ is a Banach lattice algebra. Moreover, the dual space $A^{\prime}$ is
given by
\[
A^{\prime}=\bigoplus\limits_{n=1}^{\infty}A_{n}=\left\{  f=(f_{n}):f_{n}\in
A_{n}^{\prime}\text{ and }\left\Vert f\right\Vert _{\infty}=\sup\left\Vert
f_{n}\right\Vert <+\infty\right\}  \text{.}%
\]

For every $f=(f_{n})\in A^{\prime}$ and $x=(x_{n})$, $f.x=(f_{1}.x_{1}%
,f_{2}.x_{2},...)$. So, the orbit decomposes coordinatewise $O_{f}=O_{f_{1}%
}\times O_{f_{2}}\times..$. Consequently,
\begin{align*}
\mathrm{wap}(A_{1}\oplus A_{2}\oplus...)_{1}  &  =\mathrm{wap}(A_{1}%
)\oplus\mathrm{wap}(A_{2})\oplus...\\
\mathcal{L}-\mathrm{wap}(A_{1}\oplus A_{2}\oplus...)_{1}  &  =\mathcal{L}%
-\mathrm{wap}(A_{1})\oplus\mathcal{L}-\mathrm{wap}(A_{2})\oplus...
\end{align*}

\end{enumerate}
\end{example}

It is well known that the set \textrm{$wap$}$(A)$ of weakly almost periodic
functionals on a Banach algebra is a closed vector subspace of the dual space
$A^{\prime}$. It is easy to see that the set $\mathcal{L}$-\textrm{$wap$}$(A)$
is a closed vector subspace of $A^{\prime}$. Moreover, if $0\leq
f\in\mathcal{L}$-\textrm{$wap$}$(A)$, then the ideal $I_{f}$ generated by $f$
that is
\[
I_{f}=\left\{  g\in A^{\prime}:\left\vert g\right\vert \leq\lambda f\text{ for
some }\lambda\right\}
\]
is contained in $\mathcal{L}$-\textrm{$wap$}$(A)$. In general, the space
$\mathcal{L}$-\textrm{$wap$} $(A)$ is not a vector sublattice of $A^{\prime}$,
as the following example shows.

\begin{example}
\bigskip Let $A=\ell^{1}(\mathbb{N)}$ with the multiplication defined by%
\[
x.y=(\sum\limits_{k=0}^{\infty}x_{k})(\sum\limits_{k=0}^{\infty}y_{k})v\text{
where }v=\frac{1}{2}(1,1,0,0,..)\text{.}%
\]
It is easy to check that $\ell^{1}$is a commutative Banach lattice algebra.
For $f\in A^{\prime}=\ell^{\infty}$, we have $f\circ x:y\longmapsto
(\sum\limits_{k=0}^{\infty}x_{k})(\sum\limits_{k=0}^{\infty}y_{k})f(v)$ for
all $f\in\ell^{\infty}$ and $x\in A$. Let $f=(1,-1,0,0,...)\in\ell^{\infty}$.
Then $f\in$ $\mathcal{L}$-\textrm{$wap$}$(A)$, since $O_{f}=\left\{
0\right\}  $, however $\left\vert f\right\vert =(1,1,0,0,..)\notin\mathcal{L}%
$-\textrm{$wap$}$(A)$, because, for the standard basis of $\ell^{1}$,
$\left\Vert \left\vert f\right\vert \circ e_{n}\right\Vert _{\infty
}=1\nrightarrow0$.
\end{example}

The main result of this section provides the connection between the order
structure of the dual space and the behavior of $L$-wap and wap functionals.

Recall that a Banach lattice $A$ is said to have the positive Schur property
(PSP) whenever $0\leq x_{n}\longrightarrow0$ weakly implies $\left\Vert
x_{n}\right\Vert \longrightarrow0$.

\begin{theorem}
\label{Main}Let $A$ be a Banach lattice.

\begin{enumerate}
\item If the dual space $A^{\prime}$ has an order continuous norm, then for
every Banach lattice algebra product $p$ on $A$, \textrm{$wap$}$\mathrm{(}%
A)=A^{\prime}$.

In particular, if $A^{\prime}$ has an order continuous norm then, $A$ equipped
with any product, is Arens regular.

\item If the dual space $A^{\prime}$ has the PSP then, for every Banach
lattice algebra product $p$ on $A$,
\[
\mathcal{L}\text{-}\mathrm{wap(}A)=\mathrm{wap(}A)=A^{\prime}.
\]

\item If for every Banach lattice algebra product $p$ on $A$, $\mathcal{L}%
$-\textrm{$wap$}$\mathrm{(}A)=A^{\prime}$ then $A^{\prime}$ has an order
continuous norm.
\end{enumerate}
\end{theorem}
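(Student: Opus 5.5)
For part (1), I will use the classical fact that $A^{\prime}$ has an order continuous norm if and only if every norm bounded disjoint sequence in $A$ converges weakly to zero. Equivalently, $A$ contains no lattice copy of $\ell^{1}$. Then every bounded subset of $A^{\prime}$ that is ``almost order bounded'' is relatively weakly compact. A cleaner route is via Grothendieck's double limit criterion. Fix $f\in A^{\prime}$ and sequences $(x_n),(y_m)$ in $B_A$ for which both iterated limits of $f(x_ny_m)$ exist. The bilinear form $(x,y)\mapsto f(xy)$ is a regular bilinear form, since $f=f^+-f^-$ and the product is positive. So it suffices to treat $0\le f$. For positive bilinear forms I plan to apply the characterization (Meyer-Nieberg, Theorem 2.4.14 / Dodds--Fremlin type) that $A^{\prime}$ has order continuous norm if and only if every bounded sequence in $A$ has a subsequence that is \emph{disjointly weak Cauchy} modulo disjoint pieces; concretely, I would use that the orbit $O_f$ is then contained in $\{g: |g|\le T\}$ for the positive operator $T=|L_f|$, where $L_f:A\to A^{\prime}$, $x\mapsto f.x$. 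The expected cleanest statement to invoke is: when $A^{\prime}$ is order continuous, every positive (hence every regular) operator $A\to A^{\prime}$ is weakly compact. This follows because $T'' $ maps $A''$ into $(A')''$, and order continuity of $A'$ gives that $T$ factors through a reflexive-type property. I regard this as the main obstacle. The fallback argument is Grothendieck's criterion combined with the fact that an operator $T:A\to X$ with $A'$ order continuous is weakly compact whenever $T'$ is order-weakly compact, here applied to $X=A'$. Weak compactness of $L_f$ is exactly $f\in\mathrm{wap}(A)$, giving $\mathrm{wap}(A)=A^{\prime}$ and hence Arens regularity by the preliminaries.

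For part (2), I first observe that PSP of $A^{\prime}$ implies that $A^{\prime}$ has an order continuous norm. Indeed, a PSP Banach lattice contains no copy of $c_0$, so it is a KB-space. Hence part (1) gives $\mathrm{wap}(A)=A^{\prime}$. To get $\mathcal{L}$-wap, I verify condition (2) of Proposition \ref{P1}. Let $(x_n)$ be bounded and disjoint in $A$. By order continuity of $A^{\prime}$, $|x_n|\to0$ weakly. I then claim that $|f|\circ|x_n|\to0$ weakly in $A^{\prime}$. For $F\in A^{\prime\prime}$ positive, $F(|f|\circ |x_n|)=(F\circ|f|)(|x_n|)$, where $F\circ |f|\in A^{\prime}$ by the definitions in the preliminaries; this tends to $0$. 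Since the sequence is positive, PSP gives $\||f|\circ|x_n|\|\to0$. Finally $|f\circ x_n|\le |f|\circ|x_n|$ because the product is positive, which yields $\|f\circ x_n\|\to0$.

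For part (3), I argue by contraposition. If $A^{\prime}$ is not order continuous, there is a bounded disjoint sequence $(u_n)$ in $A$, which can be taken positive and normalized, together with $0\le\varphi\in A^{\prime}$ such that $\varphi(u_n)\not\to0$; in fact $A$ then contains a lattice copy of $\ell^1$ spanned by such $u_n$. I would then build the rank-one product $p(x,y)=\varphi(x)\varphi(y)w$ for a fixed $0\le w$ with $\|w\|\le1$ and $\psi(w)=1$ for some positive $\psi$, after normalizing $\|\varphi\|\le1$. This product is associative (up to scaling it is $\varphi(x)\varphi(y)\varphi(w)w$, and associativity holds), positive and continuous, mirroring the example preceding the theorem. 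For it, $\psi\circ u_n=\varphi(u_n)\psi(w)\varphi$, whose norm does not tend to $0$. Proposition \ref{P1} then shows $\psi\notin\mathcal{L}$-$\mathrm{wap}(A)$, which is the required contradiction.
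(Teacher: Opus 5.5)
Parts (2) and (3) are sound. Part (2) takes a different and more self-contained route than the paper. Your identity $F(|f|\circ|x_n|)=(F\circ|f|)(|x_n|)$ just says that $y\mapsto|f|\circ y$ is a bounded operator $A\to A^{\prime}$. It therefore carries the weakly null sequence $(|x_n|)$ to a positive weakly null sequence, the PSP makes that sequence norm null, and $|f\circ x_n|\le|f|\circ|x_n|$ gives condition (2) of Proposition~\ref{P1}. This yields $\mathcal{L}$-$\mathrm{wap}(A)=A^{\prime}$, hence also $\mathrm{wap}(A)=A^{\prime}$, without using part (1). The paper instead gets relative weak compactness of $O_f$ from part (1), then uses that disjoint sequences in the solid hull of a relatively weakly compact set are weakly null, and then applies the PSP. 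Part (3) is the paper's idea of rank-one products built from a positive functional, argued by contraposition, and your computation $\psi\circ u_n=\varphi(u_n)\varphi$ is correct.

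The gap is in part (1), at exactly the step you call the main obstacle: that for $0\le f$ the positive operator $L_f:x\mapsto f.x$ from $A$ into $A^{\prime}$ is weakly compact. The statement is true, but you do not prove it. That $T^{\prime\prime}$ maps $A^{\prime\prime}$ into $(A^{\prime})^{\prime\prime}$ is automatic, ``factors through a reflexive-type property'' is not an argument, and ``$O_f\subseteq\{g:|g|\le T\}$'' compares functionals with an operator. Your fallback lemma is false. Take $A=X=c_0$ and $T=\mathrm{id}$: then $A^{\prime}=\ell^1$ has order continuous norm, and $T^{\prime}=\mathrm{id}_{\ell^1}$ is order weakly compact because order intervals of $\ell^1$ are norm compact. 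Yet $\mathrm{id}_{c_0}$ is not weakly compact.

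The missing ingredient is a property of the \emph{target}. $A^{\prime}$ is a dual lattice with order continuous norm, hence a KB-space and weakly sequentially complete, which is precisely what $c_0$ lacks. The paper uses this: by the cited result [Aliprantis--Burkinshaw, Thm.~5.28] the sequence $(f.x_n)$ has a weak Cauchy subsequence. That subsequence converges weakly by weak sequential completeness of $A^{\prime}$, and Eberlein--\v{S}mulian gives relative weak compactness of $O_f$.

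Your $T^{\prime\prime}$ idea can also be completed along these lines. If $G_\alpha\downarrow0$ in $A^{\prime\prime}$, then $L_f^{\prime}G_\alpha\downarrow0$ in $A^{\prime}$, so $\Vert L_f^{\prime}G_\alpha\Vert\to0$. Hence for $0\le F\in A^{\prime\prime}$ the positive functional $L_f^{\prime\prime}F=F\circ L_f^{\prime}$ is order continuous on $A^{\prime\prime}$. Since the KB-space $A^{\prime}$ is perfect, the order continuous dual $(A^{\prime\prime})^{\sim}_{n}$ is $A^{\prime}$ itself. Thus $L_f^{\prime\prime}(A^{\prime\prime})\subseteq A^{\prime}$, and $L_f$ is weakly compact by Gantmacher's theorem.
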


\begin{proof}
\ \ \ 

\begin{enumerate}
\item Assume that $A^{\prime}$ has an order continuous norm. Then $A^{\prime}$
is weakly sequentially complete. Let $0\leq f\in A^{\prime}$ and consider a
sequence $(g_{n})_{n\in\mathbb{N}}$\ in the orbit $O_{f}$\ of $f$. Then there
exists a sequence $(x_{n})_{n\in\mathbb{N}}$ with $\left\Vert x_{n}\right\Vert
\leq1$ such that $g_{n}=f.x_{n}$. Since $A^{\prime}$ has order continuous
norm, it follows from \cite[Theorem 5.28]{Aliprantis} that $(g_{n}%
)_{n\in\mathbb{N}}$ admits a weak Cauchy subsequence. Consequently,
$(g_{n})_{n\in\mathbb{N}}$ has a weakly convergent subsequence. By
Eberlein-Smulian Theorem, $O_{f}$ is relatively weakly compact. Therefore,
$f\in$\textrm{$wap$}$\mathrm{(}A)$.

\item Assume that $A^{\prime}$ has the PSP then $A^{\prime}$ has an order
continuous norm. Let $p$ be a Banach lattice algebra product on $A$, and let
$0\leq f\in A^{\prime}$. By the first point the orbit $O_{f}$ is actually
relatively weakly compact subset. According to \cite[Theorem 4.34]%
{Aliprantis}, if $(g_{n})\subset S(O_{f})$ is a disjoint sequence then it
converges weakly to zero and, by the PSP, it converges to zero in norm. Thus
$f\in\mathcal{L}$-\textrm{$wap$}$\mathrm{(}A)$.

\item Assume that $\mathcal{L}$-\textrm{$wap$}$\mathrm{(}A)=A^{\prime}$ for
every Banach lattice algebra product $p$ on $A$. To show that $A^{\prime}$ has
order continuous norm, it suffices to prove that every norm bounded disjoint
sequence in $A$ converges weakly to zero. Let $(x_{n})_{n\in\mathbb{N}}$ be a
disjoint sequence in $B_{A}$. Consider a positive functional $0\leq\varphi\in
A^{\prime}$ with $\left\Vert \varphi\right\Vert =1$ and define the product
$.:A\times A\longrightarrow A$ by $x.y=\varphi(y)x$. Then $A$, equipped with
this product becomes a Banach lattice algebra. Let $f\in A^{\prime}$, for
every $x\in A$, the action of $f\circ x$ is given by $f\circ x=f(x)\varphi$.
Since $f\in\mathcal{L}$-\textrm{$wap$}$\mathrm{(}A)$, $\left\Vert f\circ
x_{n}\right\Vert =\left\vert f(x_{n})\right\vert \longrightarrow0$. Therefore,
$x_{n}\longrightarrow0$ weakly in $A$. This shows that $A^{\prime}$ has an
order continuous norm.
\end{enumerate}
\end{proof}

The first point of this theorem appeared in \cite[Theorem 1]{Buskes} in a more
general setting, namely for the Arens regularity of bilinear maps.

It is well known that the bidual of a Banach $f$-algebra is again a Banach
$f$-algebra equipped with either the first or the second Arens product (see
\cite[Theorem3.5]{BH}), in particular, it is commutative. It follows that
every Banach $f$-algebra is Arens regular.

At the end of this section, we give a direct proof of the Arens regularity of
Banach $f$-algebras using Theorem \ref{Main}, which avoids the classical and
non trivial fact that the bidual is an $f$-algebra.

\begin{proposition}
Let $A$ be a Banach $f$-algebra. Then \textrm{$wap$}$(A)=A^{\prime}$ , and
consequently $A$ is Arens regular.
\end{proposition}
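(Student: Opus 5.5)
The plan is to prove the stronger statement $\mathcal{L}$-\textrm{$wap$}$(A)=A^{\prime}$ and then use the inclusion $\mathcal{L}$-\textrm{$wap$}$(A)\subseteq$\textrm{$wap$}$(A)$. The tool is criterion (3) of Proposition \ref{P1}, not part 1 or 2 of Theorem \ref{Main}: a Banach $f$-algebra need not have a dual with order continuous norm (think of $C(K)$), so the hypotheses of Theorem \ref{Main} are not available in general. I expect the decisive step below may not close for every Banach $f$-algebra, in which case only the weaker conclusion \textrm{$wap$}$(A)=A^{\prime}$ should be aimed for.

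\textbf{Step 1: reductions.} Since $\mathcal{L}$-\textrm{$wap$}$(A)$ is a vector subspace of $A^{\prime}$ and $f=f^{+}-f^{-}$, it suffices to treat $0\leq f\in A^{\prime}$. Fix a norm bounded disjoint sequence $(x_{n})$ and positive $y_{n}\in B_{A}$. Because the product is positive, $\left\vert y_{n}x_{n}\right\vert \leq y_{n}\left\vert x_{n}\right\vert$, so
\[
\left\vert f(y_{n}x_{n})\right\vert \leq f(y_{n}\left\vert x_{n}\right\vert ).
\]
Hence we may assume $x_{n}\geq0$, and it remains to show $f(z_{n})\rightarrow0$ for $z_{n}:=y_{n}x_{n}$.

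\textbf{Step 2: disjointness is preserved.} Apply the defining $f$-algebra property twice. From $x_{n}\wedge x_{m}=0$ and $y_{n}\geq0$ we get $y_{n}x_{n}\wedge x_{m}=0$. Applying the property again with $y_{m}$ gives $y_{n}x_{n}\wedge y_{m}x_{m}=0$. Thus $(z_{n})$ is a positive disjoint sequence with $\left\Vert z_{n}\right\Vert \leq\sup_{k}\left\Vert x_{k}\right\Vert$.

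\textbf{Step 3: the main obstacle.} A positive functional need not vanish along an arbitrary bounded disjoint positive sequence; in $\ell^{1}$ it fails for $f=(1,1,\dots)$. So the argument must use that $z_{n}=y_{n}x_{n}$ is dominated by products with uniformly bounded factors. My first attempt is to estimate finite sums. By disjointness,
\[
\sum_{n\leq N}z_{n}=\bigvee_{n\leq N}y_{n}x_{n}\leq\Big(\bigvee_{n\leq N}y_{n}\Big)\Big(\bigvee_{n\leq N}x_{n}\Big).
\]
The aim is to bound the right-hand side in norm independently of $N$. This would force $\sum_{n}f(z_{n})<\infty$, hence $f(z_{n})\rightarrow0$. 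The difficulty is that the suprema $\bigvee y_{n}$ and $\bigvee x_{n}$ need not stay bounded, and the estimate genuinely fails in $\ell^{1}$ with pointwise product. There $z_{n}=e_{n}$ is possible and $f=(1,1,\dots)$ gives $f(z_{n})=1$, consistent with Example \ref{E}, where $\mathcal{L}$-\textrm{$wap$}$=c_{0}$.

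\textbf{Fallback.} If the sum estimate fails, I would abandon the $\mathcal{L}$-wap route and work with \textrm{$wap$}$(A)$ directly. The plan is to use that, in a Banach $f$-algebra, right multiplication $R_{x}:y\mapsto yx$ is an orthomorphism. Hence $f\circ x=R_{x}^{\prime}f$ lies in the band generated by $f$ in $A^{\prime}$, and the adjoints $R_{x}^{\prime}$ ($\left\Vert x\right\Vert \leq1$) act as uniformly bounded orthomorphisms there. I would then verify Grothendieck's double limit criterion by exploiting the commutativity of Archimedean $f$-algebras together with the disjointness obtained in Step 2. This fallback is the step I expect to be hardest, and I have not checked that it closes.
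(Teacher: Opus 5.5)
Your proposal has a genuine gap: it does not reach a proof.

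\textbf{The primary route is impossible.} The target $\mathcal{L}$-\textrm{wap}$(A)=A^{\prime}$ is false for Banach $f$-algebras in general. Indeed, $\ell^{1}$ with pointwise product is a Banach $f$-algebra with $\mathcal{L}$-\textrm{wap}$(\ell^{1})=c_{0}\neq\ell^{\infty}$ (Example \ref{E}). Your own computation in Step 3 ($x_{n}=y_{n}=e_{n}$, $f=(1,1,\dots)$) is already a disproof, so Steps 1--3 should be abandoned outright rather than kept as a hoped-for route.

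\textbf{The fallback does not close as described.} The orthomorphism observation only gives $O_{f}\subseteq[-C\left\vert f\right\vert ,C\left\vert f\right\vert ]$ in $A^{\prime}$, where $C$ is the norm of the multiplication. For $A=\ell^{1}$ and $f=(1,1,\dots)$ this interval is the unit ball of $\ell^{\infty}$, which is not weakly compact. The appeal to commutativity is never turned into an argument.

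\textbf{Your reason for setting Theorem \ref{Main} aside is also off.} $C(K)^{\prime}=M(K)$ is an AL-space, so it \emph{does} have order continuous norm; the real obstruction is non-unital algebras such as $\ell^{1}$. This is exactly where the paper starts. For a unital Banach $f$-algebra, $A^{\prime}$ has order continuous norm (de Pagter), so Theorem \ref{Main}(1) gives \textrm{wap}$(A)=A^{\prime}$. The paper then treats semiprime $A$ via the embedding $x\mapsto\pi_{x}$ into $\mathcal{Z}(A)$, and the general case via $M=N^{\perp}$.

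\textbf{The missing idea is positivity of squares.} Fix $0\leq f\in A^{\prime}$ and consider the form $(x,y)\mapsto f(xy)$.
It is symmetric, because Archimedean $f$-algebras are commutative.
It is positive semidefinite: $x^{+}\wedge x^{-}=0$ forces $x^{+}x^{-}=x^{-}x^{+}=0$, so $x^{2}=(x^{+})^{2}+(x^{-})^{2}\geq0$.
By Cauchy--Schwarz the form induces a Hilbert space $H_{f}$ and a bounded map $J:A\rightarrow H_{f}$ with $\left\Vert Jx\right\Vert ^{2}=f(x^{2})\leq C\left\Vert f\right\Vert \left\Vert x\right\Vert ^{2}$ and $f(xy)=\langle Jx,Jy\rangle$.
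Hence $f.x=V(Jx)$, where $V:H_{f}\rightarrow A^{\prime}$ is given by $V(h)(y)=\langle h,Jy\rangle$.
So $O_{f}\subseteq V(\left\Vert J\right\Vert B_{H_{f}})$, which is relatively weakly compact by reflexivity.
Splitting $f=f^{+}-f^{-}$ then gives \textrm{wap}$(A)=A^{\prime}$.

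Compared with the paper, this argument uses neither de Pagter's theorem nor Theorem \ref{Main}. It also avoids the paper's two reductions, which need more justification than the paper gives:
\begin{itemize}
\item The map $x\mapsto\pi_{x}$ is not bounded below in general; for $\ell^{1}$ with pointwise product, $\left\Vert \pi_{x}\right\Vert =\left\Vert x\right\Vert _{\infty}$. So $A$ is not a closed subalgebra of $\mathcal{Z}(A)$, and Arens regularity does not pass along continuous injective homomorphisms (e.g.\ the Fourier transform $\ell^{1}(\mathbb{Z})\rightarrow C(\mathbb{T})$).
\item In the $N^{\perp}$ step, the sequences $(x_{n}),(y_{m})$ lie in $A$, not in $M$, so \textrm{wap}$(M)$ does not apply to them as written.
\end{itemize}
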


\begin{proof}
Assume first that $A$ has a unit element. Then its dual space $A^{\prime}$ has
an order continuous norm (see \cite[Theorem 3.3]{depagter}). By Theorem
\ref{Main}, this implies that \textrm{$wap$}$(A)=A^{\prime}$.

If $A$ is semiprime $f$-algebra. Then $A$ is continuously embedded in the
unital Banach algebra $\mathcal{Z}(A)$, via the mapping $x\longmapsto\pi_{x}$
where $\pi_{x}:y\longmapsto xy$. Since $\mathcal{Z}(A)$ is Arens regular, $A$
is Arens regular, because every subalgebra of an Arens regular algebra is
Arens regular (\cite[Corollary 1.4.12]{Palmer}).

In the general case, let $N$ denote the set of all nilpotent elements in $A$
and let
\[
M=N^{\perp}=\left\{  x\in A:x\perp n\text{ for all }n\in N\right\}
\]
be the disjoint complement of $N$. Then $M$ is a semiprime $f$-algebra, and
moreover it satisfies $xy\in M$ for all $x,y\in A$ (see \cite[Proposition
10.2]{BD}). Let $f\in A^{\prime}$ and denote by $g=f_{/M}$ its restriction to
$M$. Let $(x_{n})$ and $(y_{n})$ be two sequences in the closed unit
ball$\ B_{A}$, and assume that the iterated limits
\[
\lim_{n}\lim_{m}f(x_{n}y_{m})=\alpha\text{ and }\lim_{m}\lim_{n}f(x_{n}%
y_{m})=\beta\text{ }%
\]
exist. Since $x_{n}y_{m}\in M$, we have $f(x_{n}y_{m})=g(x_{n}y_{m})$ for all
$n,m\in\mathbb{N}$. By the semiprime case, $g\in\mathrm{wap}(M)$, hence
$\alpha=\beta$, so $f\in\mathrm{wap}(A)$. Consequently, $A$ is Arens regular.
\end{proof}

\section{Arens regularity of the Fremlin projective tensor product.}

In this section, we investigate the Arens regularity of the Fremlin projective
tensor products of Banach lattice algebras.

Let $A$ and $B$ be Banach lattices. Fremlin in \cite{Fremlin} defined the
positive projective norm $\left\Vert .\right\Vert _{\left\vert \pi\right\vert
}$ on the algebraic tensor product $A\otimes B$ by
\[
\left\Vert u\right\Vert _{\left\vert \pi\right\vert }=\sup\left\{  \left\vert
\varphi(x,y)\right\vert :u=%
{\textstyle\sum\limits_{k=1}^{n}}
x_{k}\otimes y_{k},\text{ }\varphi\in L\right\}  \text{,}%
\]

where $L$ is the set of all positive bilinear maps from $A\times B$ into all
Banach lattices $G$ with norm $1$. The completion of $A\otimes B$ with respect
to this norm is denoted by $A\otimes_{\left\vert \pi\right\vert }B$ and is
called the Fremlin projective tensor product of $A$ and $B$. Fremlin
(\cite[Theorem 1E (vi)]{Fremlin}) showed that the Riesz tensor product
$A\overline{\otimes}B$ is norm-dense in $A\otimes_{\left\vert \pi\right\vert
}B$ and the positive projective norm, when considered on $A\overline{\otimes
}B$, admits the following equivalent norm%
\[
\left\Vert u\right\Vert _{\left\vert \pi\right\vert }=\inf\left\{
{\textstyle\sum\limits_{k=1}^{n}}
\left\Vert x_{k}\right\Vert \left\Vert y_{k}\right\Vert :x_{k}\in A^{+}\text{,
}y_{k}\in B^{+\text{ }}\text{ and }\left\vert u\right\vert \leq%
{\textstyle\sum\limits_{k=1}^{n}}
x_{k}y_{k},\text{ }\right\}
\]

\bigskip The Fremlin projective tensor product $A\otimes_{\left\vert
\pi\right\vert }B$ satisfies the following universal property

$(UP):$ For every Banach lattice $G$ and for every continuous bilinear map
$\varphi:A\times B\longrightarrow G$ there exists a unique continuous linear
map $T:A\otimes_{\left\vert \pi\right\vert }B\longrightarrow G$ such that
\begin{align*}
T(x\otimes y)  &  =\varphi(x,y)\text{, }x\in A,y\in B\text{ and }\\
\left\Vert T\right\Vert  &  =\left\Vert \varphi\right\Vert \text{.}%
\end{align*}

Now let $A$ and $B$ be Banach lattice algebras. Then $A\otimes_{\left\vert
\pi\right\vert }B$ admits a natural algebra structure, under which it becomes
a Banach lattice algebra (see \cite{Jaber}). The multiplication on
$A\otimes_{\left\vert \pi\right\vert }B$ is defined by linear extension from
elementary tensors via
\[
(a\otimes b).(a^{\prime}\otimes b^{\prime})=(aa^{\prime}\otimes bb^{\prime
})\text{.}%
\]

Our first aim is to obtain a useful description of the $L$-wap functionals on
this tensor product. For a subset $X$ of a vector lattice, let $CSCH(X)$
denote its closed solid convex hull. We first recall the following standard fact.

\begin{lemma}
\label{le}The closed solid convex hull $CSCH(X)$ of an $L$-weakly compact set
$X$ in a Banach lattice $E$ is $L$-weakly compact.
\end{lemma}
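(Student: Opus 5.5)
The plan is to use the standard characterization of $L$-weakly compact sets via a uniform smallness condition over an order interval, and then to check that this condition survives taking solid hulls, convex hulls and norm closures.

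Concretely, I would invoke \cite[Proposition 3.6.2]{Meyer} (equivalently \cite[Theorem 4.36]{Aliprantis}): a bounded subset $X\subset E$ is $L$-weakly compact if and only if for every $\varepsilon>0$ there is $0\le u\in E$ (one may take $u$ in the ideal generated by $X$) such that
\[
\sup_{x\in X}\left\Vert (\left\vert x\right\vert -u)^{+}\right\Vert \le\varepsilon .
\]
Equivalently, $X\subset[-u,u]+\varepsilon B_{E}$. The direction "$L$-weakly compact $\Rightarrow$ such $u$ exists" is the nontrivial part of that characterization. Since the paper labels Lemma \ref{le} a standard fact, I would quote it rather than reprove it.

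Given $\varepsilon>0$, fix such a $u$ for $X$ and check the three operations in turn. First, the solid hull: if $\left\vert z\right\vert \le\left\vert x\right\vert$ with $x\in X$, then $(\left\vert z\right\vert -u)^{+}\le(\left\vert x\right\vert -u)^{+}$, so $S(X)$ satisfies the same estimate with the same $u$. Second, convex combinations: if $z=\sum\lambda_{i}z_{i}$ with $z_{i}\in S(X)$, then $\left\vert z\right\vert \le\sum\lambda_{i}\left\vert z_{i}\right\vert$, and the elementwise inequality $(a+b-u)^{+}\le\ldots$ needs care. I would instead use the decomposition form: write $z_{i}=v_{i}+w_{i}$ with $\left\vert v_{i}\right\vert \le u$ and $\left\Vert w_{i}\right\Vert \le\varepsilon$. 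This is possible for solid sets, taking $v_{i}=(z_{i}^{+}\wedge u)-(z_{i}^{-}\wedge u)$ and $\left\Vert w_{i}\right\Vert \le\left\Vert (\left\vert z_{i}\right\vert -u)^{+}\right\Vert$. Then $z=\sum\lambda_{i}v_{i}+\sum\lambda_{i}w_{i}\in[-u,u]+\varepsilon B_{E}$, and the set $[-u,u]+\varepsilon B_{E}$ is convex. Third, the closure: $\overline{[-u,u]+\varepsilon B_{E}}\subset[-u,u]+2\varepsilon B_{E}$ trivially. Fourth, solidity again: the convex hull of a solid set is solid (the Riesz decomposition gives this), and the closure of a solid set is solid. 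So $CSCH(X)$ is the closure of the convex hull of $S(X)$, and it lies in $[-u,u]+2\varepsilon B_{E}$.

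Finally, I would convert back. If $z\in[-u,u]+2\varepsilon B_{E}$, write $z=v+w$; then $(\left\vert z\right\vert -u)^{+}\le\left\vert w\right\vert$, so the uniform estimate holds with $2\varepsilon$. For a disjoint sequence $(z_{n})$ in $CSCH(X)$ we then get
\[
\left\Vert z_{n}\right\Vert \le\left\Vert \left\vert z_{n}\right\vert \wedge u\right\Vert +2\varepsilon .
\]
The terms $\left\vert z_{n}\right\vert \wedge u$ form a disjoint order bounded sequence, so they tend to zero in norm provided $u$ lies in a part of $E$ with order continuous norm. Here I expect the main obstacle: in a general $E$ a disjoint sequence in $[0,u]$ need not be norm null. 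So the characterization must be used in its sharp form, where $u$ can be chosen so that the relevant disjoint sequences vanish. Alternatively, I would argue directly. Suppose $(z_{n})$ is disjoint in $CSCH(X)$ with $\left\Vert z_{n}\right\Vert \ge\delta$. Use \cite[Theorem 4.36]{Aliprantis} (the disjoint sequences in the solid hull of an $L$-weakly compact set are exactly what the set controls) together with the fact that $L$-weak compactness of $X$ forces $\sup_{x\in X}\left\vert f_{n}\right\vert (\left\vert x\right\vert )\to0$ for every norm bounded disjoint $(f_{n})\subset E^{\prime}$. That functional condition is clearly preserved by solid hulls, convex hulls and norm closures, since $z\mapsto\left\vert f_{n}\right\vert (\left\vert z\right\vert )$ is convex and continuous. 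Using it would be my final route, and it avoids the order continuity issue entirely.
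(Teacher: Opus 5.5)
Your argument is correct. Your final (dual) route does more than the paper does. The paper's proof is a single sentence appealing to ``stability of $L$-weak compactness under solid convex hulls and norm closure'', which restates the lemma. You actually prove that stability from the duality theorem the paper itself uses in the proof of Proposition~\ref{P1}: a bounded $Y\subseteq E$ is $L$-weakly compact iff $\sup_{y\in Y}|f_n|(|y|)\to 0$ for every disjoint $(f_n)\subseteq B_{E'}$.

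Two small points on how to write it up:
\begin{itemize}
\item The cleanest version of your key step uses sublevel sets. For each $n$, the set $\{z\in E:\ |f_n|(|z|)\le \sup_{x\in X}|f_n|(|x|)\}$ is closed, convex and solid, and it contains $X$. Hence it contains $CSCH(X)$.
\item Say explicitly that $CSCH(X)$ is bounded, since the characterization is stated for bounded sets. This holds because $CSCH(X)$ lies in a closed ball, which is closed, convex and solid.
\end{itemize}

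On your first route: the equivalence you quote is false in the backward direction. In $\ell^{\infty}$, the set $X=[-\mathbf{1},\mathbf{1}]$ satisfies it with $u=\mathbf{1}$, which lies in the ideal generated by $X$. Yet the unit vectors in $X$ are disjoint and have norm one. However, you only use the forward direction, and the obstacle you flagged disappears because of the parenthetical you quoted:
\begin{itemize}
\item For $x\in X$ we have $[0,|x|]\subseteq S(X)$, so disjoint sequences in $[0,|x|]$ are norm null.
\item By Riesz decomposition, the elements with this property form an ideal. That ideal therefore contains the ideal generated by $X$, and hence contains $u$.
\item So $\||z_n|\wedge u\|\to 0$, giving $\limsup\|z_n\|\le 2\varepsilon$, and that route closes as well.
\end{itemize}

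Comparing the two: the first route is more hands-on, since it only quotes the order-interval approximation. The dual route is shorter and matches the tools the paper uses elsewhere, in Propositions~\ref{P1} and~\ref{KK}.
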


\begin{proof}
This follows from the stability of $L$-weak compactness under solid convex
hulls and norm closure.
\end{proof}

We shall use the following description of the unit ball.

Let $A$ and $B$ be two Banach lattices. The following result appears to be
known, however, we were unable to find an explicit reference.

\begin{proposition}
\label{P3}For Banach lattices $A$ and $B$,
\[
B_{A\otimes_{\left\vert \pi\right\vert }B}=CSCH(B_{A^{+}}\otimes B_{B^{+}}),
\]
where
\[
B_{A^{+}}\otimes B_{B^{+}}=\left\{  x\otimes y:0\leq x\in B_{A}\text{, }0\leq
y\in B_{B}\right\}
\]
\qquad
\end{proposition}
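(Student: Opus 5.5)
We prove the two inclusions separately. Write $C=CSCH(B_{A^{+}}\otimes B_{B^{+}})$, and recall that $\left\Vert x\otimes y\right\Vert _{\left\vert \pi\right\vert }=\left\Vert x\right\Vert \left\Vert y\right\Vert$ for all $x\in A$, $y\in B$ (a standard property of the Fremlin norm, which is a cross norm).

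For the inclusion $C\subseteq B_{A\otimes_{\left\vert \pi\right\vert }B}$, note that each $x\otimes y$ with $0\leq x\in B_{A}$, $0\leq y\in B_{B}$ has norm at most $1$. The closed unit ball of a Banach lattice is convex, solid (since $\left\vert u\right\vert \leq\left\vert v\right\vert$ implies $\left\Vert u\right\Vert \leq\left\Vert v\right\Vert$) and norm closed. It therefore contains the closed solid convex hull of any of its subsets, and this inclusion is immediate.

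For the reverse inclusion, first take $u\in A\overline{\otimes}B$ with $\left\Vert u\right\Vert _{\left\vert \pi\right\vert }<1$. By Fremlin's formula for the norm on the Riesz tensor product, there are $x_{k}\in A^{+}$ and $y_{k}\in B^{+}$ with
\[
\left\vert u\right\vert \leq\sum_{k=1}^{n}x_{k}\otimes y_{k},\qquad\sum_{k=1}^{n}\left\Vert x_{k}\right\Vert \left\Vert y_{k}\right\Vert <1.
\]
We may assume that all terms are nonzero. Put $\lambda_{k}=\left\Vert x_{k}\right\Vert \left\Vert y_{k}\right\Vert$, $x_{k}^{\prime}=x_{k}/\left\Vert x_{k}\right\Vert$ and $y_{k}^{\prime}=y_{k}/\left\Vert y_{k}\right\Vert$. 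Then
\[
\sum_{k}x_{k}\otimes y_{k}=\sum_{k}\lambda_{k}\,x_{k}^{\prime}\otimes y_{k}^{\prime}+\Big(1-\sum_{k}\lambda_{k}\Big)\,0
\]
is a convex combination of elements of $B_{A^{+}}\otimes B_{B^{+}}\cup\{0\}$, and $0=0\otimes0$ lies in that set. This sum is thus in the convex hull, and $u$, being dominated in modulus by it, lies in the solid convex hull. (The solid hull of a convex set is convex, so domination by a convex combination is enough.) So the open unit ball of $A\overline{\otimes}B$ is contained in $C$.

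Next, $A\overline{\otimes}B$ is norm dense in $A\otimes_{\left\vert \pi\right\vert }B$ by \cite[Theorem 1E (vi)]{Fremlin}, so its open unit ball is dense in the open unit ball of the completion. Since $C$ is closed, $C$ contains the open unit ball of $A\otimes_{\left\vert \pi\right\vert }B$, and hence its closure, the closed unit ball $B_{A\otimes_{\left\vert \pi\right\vert }B}$.

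The main point to be careful about is the density step. We need that every $u$ with $\left\Vert u\right\Vert <1$ in the completion is a limit of elements of $A\overline{\otimes}B$ of norm $<1$, and this follows from density together with continuity of the norm. We must also apply Fremlin's inf formula only on $A\overline{\otimes}B$, where it is valid, and never on the completion directly.
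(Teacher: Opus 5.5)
Your proof is correct and follows the paper's route. You get the easy inclusion from the cross-norm property, apply Fremlin's infimum formula on $A\overline{\otimes}B$ with the normalization $\lambda_k=\left\Vert x_k\right\Vert\left\Vert y_k\right\Vert$, and finish with a limiting argument; you also make the density passage to the completion explicit (the paper leaves it implicit) and turn the sum into a genuine convex combination by adding the term $0=0\otimes 0$.

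One small correction: the parenthetical claim that the solid hull of a convex set is convex is false in general. In $\mathbb{R}^2$, the solid hull of the segment from $(2,-1)$ to $(-1,2)$ contains $(2,1)$ and $(1,2)$ but not $(3/2,3/2)$. The claim does hold for convex sets of positive vectors such as $\mathrm{co}(B_{A^{+}}\otimes B_{B^{+}})$, and in any case you do not need it, since every solid convex set containing $B_{A^{+}}\otimes B_{B^{+}}$ already contains the solid hull of its convex hull.
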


\begin{proof}
It is clear that $B_{A^{+}}\otimes B_{B^{+}}\subseteq B_{A\otimes_{\left\vert
\pi\right\vert }B}$ and hence $CSCH(B_{A}\otimes B_{B})\subseteq
B_{A\otimes_{\left\vert \pi\right\vert }B}$. For the reverse inclusion, it is
enough to show that every $u$ $\in A\overline{\otimes}B$ satisfying
$\left\Vert u\right\Vert \leq1$ belongs to $CSCH(B_{A^{+}}\otimes B_{B^{+}})$.
First, consider $u\in A\overline{\otimes}B$ with $\left\Vert u\right\Vert <1$.
The definition of the positive projective norm gives positive vectors
$x_{k}\in A$ and $y_{k}\in B$ such that $\left\vert u\right\vert \leq%
{\displaystyle\sum\limits_{k=1}^{n}}
x_{k}\otimes y_{k}$ and $%
{\displaystyle\sum\limits_{k=1}^{n}}
\left\Vert x_{k}\right\Vert \left\Vert y_{k}\right\Vert <1$. Let
$x_{i}^{\prime}=\dfrac{x_{i}}{\left\Vert x_{i}\right\Vert }$, $y_{i}^{\prime
}=\dfrac{y_{i}}{\left\Vert y_{i}\right\Vert }$ and $\lambda_{i}=\left\Vert
x_{i}\right\Vert \left\Vert y_{i}\right\Vert $. Then $\left\vert u\right\vert
\leq%
{\displaystyle\sum\limits_{k=1}^{n}}
\lambda_{k}x_{k}^{\prime}\otimes y_{k}^{\prime}$ with $\lambda_{k}\geq0$ and $%
{\displaystyle\sum\limits_{k=1}^{n}}
\lambda_{k}<1$. Therefore $u$ belongs to the solid convex hull $CH(B_{A^{+}%
}\otimes B_{B^{+}})$ of $B_{A^{+}}\otimes B_{B^{+}}$. Now, if $\left\Vert
u\right\Vert =1$, the sequence $((1-\frac{1}{n})u)$ is contained in
$SCH(B_{A^{+}}\otimes B_{B^{+}})$ and converges to $u$, hence $u\in
CSCH(B_{A^{+}}\otimes B_{B^{+}})$. It follows that the closed unit ball of
$A\overline{\otimes}B$ is contained in $CSCH(B_{A^{+}}\otimes B_{B^{+}})$.
\end{proof}

To characterize almost periodicity of functionals on the Fremlin projective
tensor product, we need the following notion, which appears in
\cite[Definition 5.3]{m-weak}

\begin{definition}
Two vectors $(x,y)$ and $(x^{\prime},y^{\prime})$ in $A\times B$ are said to
be sporadically disjoint if $x\perp x^{\prime}$ or $y\perp y^{\prime}$.

A sequence $(x_{n},y_{n})$ in $A\times B$ is called sporadically disjoint
sequence if for $n\neq m$ the pairs $(x_{n},y_{n})$ and $(x_{m},y_{m})$ are
sporadically disjoint .
\end{definition}

The following lemma is well known, see \cite[Lemma 5.5]{m-weak}. We include
the proof for completeness in this simplified setting.

\begin{lemma}
Let $x\otimes y$ and $x^{\prime}\otimes y^{\prime}\in A\otimes_{\left\vert
\pi\right\vert }B$. Then $x\otimes y\perp x^{\prime}\otimes y^{\prime}$ if and
only if $(x,y)$ and $(x^{\prime},y^{\prime})$ are sporadically disjoint.
\end{lemma}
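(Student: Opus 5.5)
We need to show that $x\otimes y\perp x'\otimes y'$ holds if and only if $x\perp x'$ or $y\perp y'$. Both directions rest on Fremlin's basic identity $|x\otimes y|=|x|\otimes|y|$ in $A\overline{\otimes}B$ (hence in $A\otimes_{|\pi|}B$), together with the fact that the tensor map $(u,v)\mapsto u\otimes v$ is a lattice bimorphism on positive elements. The latter means that for $u,u'\ge0$ and $v\ge0$ we have $(u\wedge u')\otimes v=(u\otimes v)\wedge(u'\otimes v)$, and symmetrically in the second variable. Using the modulus identity, we may replace $x,y,x',y'$ by their moduli. So throughout we assume all four vectors are positive.

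\emph{Sufficiency.} Suppose $x\wedge x'=0$. Set $w=y+y'\ge0$. Then $0\le x\otimes y\le x\otimes w$ and $0\le x'\otimes y'\le x'\otimes w$. The bimorphism property gives
\[
(x\otimes w)\wedge(x'\otimes w)=(x\wedge x')\otimes w=0.
\]
Since $x\otimes y$ and $x'\otimes y'$ are dominated by these two disjoint positive elements, they are disjoint. The case $y\perp y'$ is symmetric.

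\emph{Necessity.} This is the main step. Suppose $x\otimes y\perp x'\otimes y'$, but $x\wedge x'=a\neq0$ and $y\wedge y'=b\neq0$. Then $0\le a\otimes b\le x\otimes y$ and $0\le a\otimes b\le x'\otimes y'$. Therefore
\[
a\otimes b\le (x\otimes y)\wedge(x'\otimes y')=0,
\]
so $a\otimes b=0$. It remains to see that $a\otimes b\neq0$ whenever $a,b>0$.

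To prove this, choose positive functionals $f\in A'$ and $g\in B'$ with $f(a)>0$ and $g(b)>0$; these exist because the positive functionals separate positive elements. The bilinear form $(u,v)\mapsto f(u)g(v)$ is positive and continuous into $\mathbb{R}$. By the universal property (UP), it induces a continuous linear functional $T$ on $A\otimes_{|\pi|}B$ with $T(a\otimes b)=f(a)g(b)>0$. Hence $a\otimes b\neq0$, which is a contradiction. So $x\perp x'$ or $y\perp y'$.

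The only delicate point is the use of the lattice-bimorphism identity in the completion. We take it from Fremlin's construction (\cite[Theorem 1E]{Fremlin}): $A\overline{\otimes}B$ is a vector sublattice of $A\otimes_{|\pi|}B$, so lattice operations computed in the Riesz tensor product agree with those in the Banach lattice.
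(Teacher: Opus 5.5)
Your proof is correct and follows the same route as the paper. For sufficiency you dominate the two tensors by $x\otimes w$ and $x'\otimes w$ with $w=y+y'$ (the paper uses $|y|\vee|y'|$) and apply the Riesz bimorphism identity. For necessity you use $(|x|\wedge|x'|)\otimes(|y|\wedge|y'|)\le(|x|\otimes|y|)\wedge(|x'|\otimes|y'|)$. Your extra argument that $a\otimes b\neq0$ for $a,b>0$, via positive functionals $f,g$ and the induced functional, fills in a step the paper leaves implicit. It also correctly relies only on the universal property for positive bilinear forms.
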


\begin{proof}
Suppose that $x\perp x^{\prime}\in A$. Then
\begin{align*}
\left\vert x\right\vert \otimes\left\vert y\right\vert \wedge\left\vert
x^{\prime}\right\vert \otimes\left\vert y\right\vert  &  \leq\left\vert
x\right\vert \otimes(\left\vert y\right\vert \vee\left\vert y^{\prime
}\right\vert )\wedge\left\vert x^{\prime}\right\vert \otimes(\left\vert
y\right\vert \vee\left\vert y^{\prime}\right\vert )\\
&  =\left\vert x\right\vert \wedge\left\vert x^{\prime}\right\vert
\otimes(\left\vert y\right\vert \vee\left\vert y^{\prime}\right\vert
)=0\text{,}%
\end{align*}
and therefore, $x\otimes y\perp x^{\prime}\otimes y^{\prime}$.

Conversely, assume that $\left\vert x\right\vert \otimes\left\vert
y\right\vert \wedge\left\vert x^{\prime}\right\vert \otimes\left\vert
y^{\prime}\right\vert =0$. From the inequality
\[
\left\vert x\right\vert \wedge\left\vert x^{\prime}\right\vert \otimes
\left\vert y\right\vert \wedge\left\vert y^{\prime}\right\vert \leq\left\vert
x\right\vert \otimes\left\vert y\right\vert \wedge\left\vert x^{\prime
}\right\vert \otimes\left\vert y^{\prime}\right\vert =0\text{,}%
\]
it follows that $\left\vert x\right\vert \wedge\left\vert x^{\prime
}\right\vert \otimes\left\vert y\right\vert \wedge\left\vert y^{\prime
}\right\vert =0$ . Thus, $\left\vert x\right\vert \wedge\left\vert x^{\prime
}\right\vert =0$ or $\left\vert y\right\vert \wedge\left\vert y^{\prime
}\right\vert =0$.
\end{proof}

To study the orbit of a functional $f$ on $A\otimes_{\left\vert \pi\right\vert
}B$, we face the difficulty of analyzing the complex set $O_{f}=\left\{
f.u\text{: }\left\Vert u\right\Vert _{\pi}\leq1\right\}  $. The next result
shows that it is sufficient to restrict attention to decomposable tensors
$x\otimes y\in B_{A^{+}}\otimes B_{B^{+}}$ instead of arbitrary elements $u$
in the closed unit ball $B_{A\otimes_{\left\vert \pi\right\vert }B}$.

\begin{proposition}
\label{KK}Let $f\in(A\otimes_{\left\vert \pi\right\vert }B)^{\prime}$. Then
the following assertions are equivalent:

\begin{description}
\item[(1)] $f\in\mathcal{L}$-\textrm{wap}$(A\otimes_{\left\vert \pi\right\vert
}B)$

\item[(2)] The set $W_{f}=\left\{  f.x\otimes y:0\leq x\in B_{A}\text{,}0\leq
y\in B_{B}\right\}  $ is $L$-weakly compact.

\item[(3)] For every positive sporadically disjoint sequence $(x_{n}%
,y_{n})\subset B_{A\times B}$ and every sequence $(x_{n}^{\prime}%
,y_{n}^{\prime})\subseteq B_{A\times B}$
\[
f(x_{n}^{\prime}x_{n}\otimes y_{n}^{\prime}y_{n})\longrightarrow0
\]

\end{description}
\end{proposition}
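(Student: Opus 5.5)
We prove (1)$\Rightarrow$(2)$\Rightarrow$(1) and (2)$\Leftrightarrow$(3), using Proposition \ref{P3}, Lemma \ref{le} and the disjointness criterion of Proposition \ref{P1}.

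\textbf{(1)$\Rightarrow$(2).} Since $B_{A^{+}}\otimes B_{B^{+}}\subseteq B_{A\otimes_{\left\vert \pi\right\vert }B}$, we have $W_{f}\subseteq O_{f}$. A subset of an $L$-weakly compact set is $L$-weakly compact, because its solid hull is smaller.

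\textbf{(2)$\Rightarrow$(1).} The map $\Phi:u\longmapsto f.u$ from $A\otimes_{\left\vert \pi\right\vert }B$ into the dual is linear and norm continuous. Hence
\[
\Phi(SCH(D))\subseteq CH(\Phi(S(D)))\quad\text{and}\quad \Phi(\overline{C})\subseteq\overline{\Phi(C)}
\]
for the convex and closure steps, where $D=B_{A^{+}}\otimes B_{B^{+}}$. The solid step is the delicate one. If $\left\vert v\right\vert \leq\left\vert w\right\vert$, then $v=v^{+}-v^{-}$ with $0\leq v^{\pm}\leq\left\vert w\right\vert$. We would like to dominate $\left\vert f.v\right\vert$ by something in the solid hull of $\{f.w\}$. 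This does not hold pointwise, so I would not work on sets. Instead I would use the sequential criterion (3) of Proposition \ref{P1} directly.

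Concretely, let $(g_{n})$ be a disjoint norm bounded sequence in $A\otimes_{\left\vert \pi\right\vert }B$ and $0\leq u_{n}\in B$ (the unit ball of the tensor product). We must show $f(u_{n}g_{n})\to0$. By Proposition \ref{P1} applied with roles of left and right multiplication matched to the orbit definition, it suffices to test on positive elements. By Proposition \ref{P3}, each $0\leq u_{n}$ is a norm limit of elements dominated by $\sum_{k}\lambda_{k}x_{k}\otimes y_{k}$ with $\sum\lambda_{k}\leq1$. Positivity of the product and boundedness of $\left\vert f\right\vert$ on the relevant expressions reduce $f(u_{n}g_{n})$ to convex combinations of terms $f(x\otimes y\cdot g_{n})$, modulo a Riesz decomposition step. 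Here we use $0\le u\le\sum\lambda_k x_k\otimes y_k$, so $u=\sum u_k$ with $0\le u_k\le \lambda_k x_k\otimes y_k$, and replace $\left\vert f\right\vert$ by the Kantorovich formula as in the proof of Proposition \ref{P1}. Since $W_{f}$ is $L$-weakly compact, $\sup_{x,y}\left\vert f((x\otimes y)g_{n})\right\vert\to0$, uniformly over $D$. Uniformity then passes to convex combinations and to norm limits. The main obstacle is exactly this passage through the solid hull: we need $\sup_{0\le u\le x\otimes y}\left\vert f(ug_{n})\right\vert\to0$. I would handle it by noting that $S(W_f)$ is $L$-weakly compact, and that $\left\vert f.u\right\vert$ for $0\le u\le x\otimes y$ is controlled through $\left\vert f\right\vert .(x\otimes y)$. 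If necessary, I would first treat $0\le f$ and then split $f$ using $I_f$-type domination. A cleaner route is to realize $L$-weak compactness of $W_f$ as $\sup_{w\in W_f}\left\vert w\right\vert(h_n)\to0$ for disjoint bounded $(h_n)$, and to use that $\left\vert f.u\right\vert \le$ a sum of such moduli via the Riesz decomposition.

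\textbf{(2)$\Leftrightarrow$(3).} By the set analogue of Proposition \ref{P1}, $W_{f}$ is $L$-weakly compact iff for every disjoint bounded sequence $(g_{n})$ and all $w_{n}\in D$ we have $f(w_{n}g_{n})\to0$ in the appropriate order. The right-multiplication version, matching $f\circ x$ in Proposition \ref{P1}, is handled symmetrically. For (2)$\Rightarrow$(3), take $g_{n}=x_{n}\otimes y_{n}$. These are disjoint by the sporadic disjointness lemma, and splitting $x_{n}',y_{n}'$ into positive and negative parts reduces to $w_{n}=x_{n}'\otimes y_{n}'$ with positive entries. For (3)$\Rightarrow$(2), a general disjoint sequence $(g_{n})$ must be approximated by elementary ones. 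I would approximate $\left\vert g_{n}\right\vert$ in norm by elements of the Riesz tensor product dominated by finite sums $\sum_{k}a_{k}\otimes b_{k}$. I would then use a diagonal argument with a gliding hump: pass to a subsequence and cut the $a_{k}\otimes b_{k}$ by band-like truncations to obtain a sporadically disjoint positive sequence $(x_{n},y_{n})$ with $\left\vert f(w_{n}g_{n})\right\vert$ bounded below along it. This would contradict (3). This extraction of a sporadically disjoint sequence from an arbitrary disjoint one is the step I expect to be hardest.
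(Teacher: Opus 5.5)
Your proposal does not prove the statement, and the step you single out as the crux cannot be repaired. For non-positive $f$ the implication (2)$\Rightarrow$(1) is false. So the estimate $\sup_{0\le u\le x\otimes y}|f(uv_n)|\to0$, which you correctly isolate as what is needed, does not follow from (2).

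Each of your proposed repairs replaces $f$ by $|f|$ or $f^{\pm}$: the bound $|f.u|\le|f|.(x\otimes y)$, the reduction to $0\le f$ via $I_f$, and the domination of $|f.u|$ by a sum of moduli. But hypothesis (2) concerns $W_f$ only and says nothing about $W_{|f|}$ or $W_{f^{\pm}}$. In the example below, $W_f$ is $L$-weakly compact while $W_{|f|}$ is not.

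The paper proves (2)$\Rightarrow$(1) by the set-level argument you rejected. It asserts $T_f(CSCH(B_{A^+}\otimes B_{B^+}))\subseteq CSCH(W_f)$ and then applies Lemma \ref{le}. That inclusion is exactly the solid step you doubted. It is valid for $f\ge0$, since then $|u|\le w$ gives $|f.u|\le f.|u|\le f.w$. So for positive $f$ both the paper's argument and your sequential route go through. In general, however, the inclusion fails even up to a constant.

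Your (1)$\Rightarrow$(2) and (2)$\Rightarrow$(3) are fine. Your (3)$\Rightarrow$(2) is still an unexecuted gliding-hump plan. The paper's version of that step passes, without justification, from disjoint sequences in the solid hull of $B_{A^+}\otimes B_{B^+}$ to disjoint sequences of elementary tensors. In any case, (3) does not imply (1).

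Here is a counterexample. Set up the algebras as follows.
Let $N_n=2^n$, and let $H_n$ be the $N_n\times N_n$ Sylvester--Hadamard matrix, with entries $\pm1$ and $\|H_n\|_{2\to2}=N_n^{1/2}$.
Let $A=(\bigoplus_n\ell^\infty_{N_n})_{\ell^1}$ with product $(aa')_n=a'_n(1)\,a_n$.
Let $B=c_0$ with product $bb'=b'_1\,b$.
Both are nontrivial Banach lattice algebras; write $e_{n,i}$ and $e_j$ for their unit vectors.
Via $(A\otimes_{|\pi|}B)'\cong\mathcal{L}^r(A,\ell^1)$, the dual is the $\ell^\infty$-sum over $n$ of $\ell^1(\{1,\dots,N_n\}\times\mathbb{N})$, with the coordinatewise order.

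Now define the functional. Let $f(e_{n,i}\otimes e_j)=(H_n)_{ij}/N_n^2$ for $i,j\le N_n$ and $0$ otherwise, so $\|f\|=1$. Write $f_n$ for the restriction of $f$ to block $n$, and let $\chi_n$ be the coordinate functional at $e_{n,1}\otimes e_1$.
Checking on elementary tensors and extending by continuity gives $f(uv)=\sum_n f_n(u)\chi_n(v)$, that is, $f.u=\sum_n f_n(u)\chi_n$.

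Condition (2) holds. For $0\le x\in B_A$ and $0\le y\in B_{c_0}$, the Hadamard bound gives $|f_n(x\otimes y)|\le N_n^{-1/2}$. Hence $W_f\subseteq\{\sum_n c_n\chi_n:|c_n|\le N_n^{-1/2}\}$. Every disjoint sequence in the solid hull of this set is norm null. One checks (3) the same way.

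Condition (1) fails. Let $u_k=\sum_{i,j\le N_k}(H_k)_{ij}\,e_{k,i}\otimes e_j$. Then $|u_k|=\mathbf{1}_k\otimes\mathbf{1}_{[1,N_k]}$, where $\mathbf{1}_k$ is the unit of block $k$, so $\|u_k\|\le1$. Moreover $f.u_k=f_k(u_k)\chi_k=\chi_k$. The $\chi_k$ are pairwise disjoint of norm one, so $O_f$ is not $L$-weakly compact.

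Finally, $|f|$ has all block entries $N_n^{-2}$ on $N_n\times N_n$. Hence $|f|.(\mathbf{1}_k\otimes\mathbf{1}_{[1,N_k]})=\chi_k\in W_{|f|}$, which is why passing to $|f|$ cannot work.
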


\begin{proof}
(1)$\Longrightarrow$(2) follows from the fact that $W_{f}\subseteq O_{f}$ and
$O_{f}$ is $L$-weakly compact.

(2)$\Longrightarrow$(1) Assume that $W_{f}$ is $L$-weakly compact. Using the
linearity and the continuity of the operator $T_{f}:A\otimes_{\left\vert
\pi\right\vert }B\longrightarrow(A\otimes_{\left\vert \pi\right\vert
}B)^{\prime}$ defined by $T_{f}(u)=f.u$ and  proposition \ref{P3}, we obtain
\[
O_{f}=T_{f}(B_{A\otimes_{\left\vert \pi\right\vert }B})=T_{f}(CSCH(B_{A^{+}%
}\otimes B_{B^{+}}))\subseteq CSCH(W_{f})
\]
By Lemma \ref{le}, $CSCH(W_{f})$ is $L$-weakly compact, hence $O_{f}$ is
$L$-weakly compact, and therefore $f\in\mathcal{L}$-\textrm{wap}%
$(A\otimes_{\left\vert \pi\right\vert }B)$.

(2)$\Leftrightarrow$(3) The set $W_{f}$ is $L$-weakly compact if and only if,
for all disjoint sequence $(g_{n})\subseteq S(W_{f})$ converges uniformly to
zero on $B_{A\otimes_{\left\vert \pi\right\vert }B}$, that is,
\[
\left\Vert g_{n}\right\Vert =\sup\limits_{u\text{ }\in B_{A\otimes_{\left\vert
\pi\right\vert }B}}\left\vert g_{n}(u)\right\vert \longrightarrow0
\]
Using the universal property of the Fremlin projective tensor product
\[
\left\Vert g_{n}\right\Vert =\sup\limits_{x\in B_{A^{+}},y\in B_{B^{+}}%
}\left\vert g_{n}(x\otimes y)\right\vert \longrightarrow0
\]
for every disjoint sequence $(g_{n})_{n}\subseteq S(W_{f})$. Thus, $(g_{n})$
converges uniformly to zero on $B_{A^{+}}\otimes B_{B^{+}}$. By \cite[theorem
5.63]{Aliprantis}, this is equivalent to the fact that for every disjoint
sequence $(x_{n}\otimes y_{n})\in B_{A^{+}}\otimes B_{B^{+}}$, we have
\[
\sup_{h\in W_{f}}\left\vert h(x_{n}\otimes y_{n})\right\vert =\sup_{x\in
B_{A},\text{ }y\in B_{B^{+}}}\left\vert f.x\otimes y(x_{n}\otimes
y_{n})\right\vert \longrightarrow0
\]
Thus,$W_{f}$ is $L$-weakly compact if and only if for every positive
sporadically disjoint $(x_{n},y_{n})\subset B_{A\times B}$ and every sequence
$(x_{n}^{\prime},y_{n}^{\prime})\subseteq B_{A\times B}$
\[
f(x_{n}^{\prime}x_{n}\otimes y_{n}^{\prime}y_{n})\longrightarrow0\text{.}%
\]
This completes the proof.
\end{proof}

Let $A$ be a Banach algebra. We denote by $A^{2}=\left\{  ab:a,b\in A\right\}
$. If $A^{2}=0$ , we say that $A$ is a trivial Banach algebra . It is clear
that the Fremlin projective tensor product algebra $A\otimes_{\left\vert
\pi\right\vert }B$ is trivial if and only if at least one of the algebras $A$
or $B$ is trivial. Therefore, throughout the remainder of this section, we
assume that the algebras under consideration are nontrivial.

Let $f\in A^{\prime}$ and $g\in B^{\prime}$ we define the linear functional
$f\otimes g\in(A\otimes_{\left\vert \pi\right\vert }B)^{\prime}$ by
\[
f\otimes g(x\otimes y)=f(x)g(y)\text{, \ }x\in A\text{ and }y\in B.\
\]
As an application of \ proposition \ref{KK}, we obtain the following result.

\begin{proposition}
\label{PPP}

\begin{enumerate}
\item $f\in\mathcal{L}$-$\mathrm{wap}(A)$, $g\in\mathcal{L}$-$\mathrm{wap}%
(B)\Longrightarrow f\otimes g\in\mathcal{L}$-$\mathrm{wap}(A\otimes
_{\left\vert \pi\right\vert }B)$.

\item $f\otimes g\in\mathcal{L}$-$\mathrm{wap}(A\otimes_{\left\vert
\pi\right\vert }B)$, $g_{/B^{2}}\neq0$ $\Longrightarrow$ $f\in\mathcal{L}%
$-$\mathrm{wap}(A)$.

\item $f\otimes g\in\mathrm{wap}(A\otimes_{\left\vert \pi\right\vert }B)$,
$g_{/B^{2}}\neq0\Longrightarrow$ $f\in\mathrm{wap}(A)$
\end{enumerate}
\end{proposition}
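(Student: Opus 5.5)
The plan is to derive all three assertions from Proposition \ref{KK} (for (1)–(2)) and from Grothendieck's double limit criterion (for (3)). Throughout I use the identity $(f\otimes g)(a\otimes b)=f(a)g(b)$, so that
\[
(f\otimes g)\big((a\otimes b)(a'\otimes b')\big)=f(aa')\,g(bb').
\]

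\emph{Assertion (1).} I would verify condition (3) of Proposition \ref{KK}. Let $(x_n,y_n)\subset B_{A\times B}$ be positive and sporadically disjoint, and let $(x'_n,y'_n)\subseteq B_{A\times B}$. We must show that $f(x'_nx_n)\,g(y'_ny_n)\to 0$.

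Suppose not. Then, after passing to a subsequence, $|f(x'_nx_n)g(y'_ny_n)|\geq\varepsilon$ for all $n$. Colour each pair $\{n,m\}$ with $n\neq m$ by ``$x_n\perp x_m$'' if that holds, and by ``$y_n\perp y_m$'' otherwise. By sporadic disjointness every pair receives a colour. Ramsey's theorem then gives an infinite set of indices on which $(x_n)$ is disjoint, or one on which $(y_n)$ is disjoint.

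In the first case, Proposition \ref{P1}(2) gives $\|f\circ x_n\|\to 0$. Hence
\[
|f(x'_nx_n)g(y'_ny_n)|\leq\|f\circ x_n\|\,\|g\|\to 0,
\]
a contradiction. The second case is symmetric, using $g\in\mathcal{L}$-$\mathrm{wap}(B)$. This Ramsey step is the only genuinely non-routine point: sporadic disjointness is a pairwise condition, and it does not directly split the sequence into two disjoint subsequences.

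\emph{Assertion (2).} Since $g_{/B^2}\neq 0$, there are $b_1,b_2\in B$ with $g(b_1b_2)\neq0$. Writing $b_i=b_i^+-b_i^-$ and using bilinearity, we may assume $b_1,b_2\geq 0$. After scaling, we may also assume $b_1,b_2\in B_B$, and we set $c=g(b_1b_2)\neq 0$.

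I would then verify condition (3) of Proposition \ref{P1} for $f$. Let $(x_n)$ be a norm bounded disjoint sequence (scaled into $B_A$) and let $(y_n)\subset B_A$ be positive. The sequences $(x_n^+)$ and $(x_n^-)$ are positive and disjoint. Hence $(x_n^{\pm},b_2)$ is a positive sporadically disjoint sequence in $B_{A\times B}$.

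Applying (1)$\Rightarrow$(3) of Proposition \ref{KK} to $f\otimes g$, with the sequence $(y_n,b_1)$ as the second sequence, gives
\[
f(y_nx_n^{\pm})\,c\to 0.
\]
Dividing by $c$ and subtracting yields $f(y_nx_n)\to0$. Thus $f\in\mathcal{L}$-$\mathrm{wap}(A)$.

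\emph{Assertion (3).} Choose $b_1,b_2\in B_B$ with $c=g(b_1b_2)\neq0$; positivity is not needed here. Let $(a_n)$ and $(a'_m)$ be sequences in $B_A$ for which both iterated limits of $f(a_na'_m)$ exist. The elements $a_n\otimes b_1$ and $a'_m\otimes b_2$ lie in the unit ball of $A\otimes_{|\pi|}B$, and
\[
(f\otimes g)\big((a_n\otimes b_1)(a'_m\otimes b_2)\big)=c\,f(a_na'_m).
\]
Both iterated limits therefore exist for $f\otimes g$ as well. Since $f\otimes g\in\mathrm{wap}(A\otimes_{|\pi|}B)$, Grothendieck's criterion forces these two limits to coincide. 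Dividing by $c$, the iterated limits of $f(a_na'_m)$ agree, so $f\in\mathrm{wap}(A)$.
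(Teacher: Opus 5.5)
Your proposal is correct and follows the paper's proof essentially step for step: Proposition \ref{KK}(3) with a Ramsey argument for (1), testing $f\otimes g$ on $x_n\otimes b_2$ against $y_n\otimes b_1$ for (2), and transferring the iterated limits through $a_n\otimes b_1$ and $a'_m\otimes b_2$ for (3). Your version is slightly more careful than the paper's in three places: you justify choosing $b_1,b_2\geq 0$, you split $x_n=x_n^+-x_n^-$ so that the sequence fed into Proposition \ref{KK}(3) really is positive, and you keep the order of the factors in $g(b_1b_2)$ consistent.
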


\begin{proof}
\ \ \ \ \ \ \ \ \ 

\begin{enumerate}
\item Let $(x_{n},y_{n})$ be a positive sporadically disjoint sequence in
$B_{A}\times B_{B}$, and let $(x_{n}^{\prime},y_{n}^{\prime})$ be \ a sequence
in$\ B_{A}\times B_{B}$. We must show $\alpha_{n}=f(x_{n}^{\prime}%
x_{n})g(y_{n}^{\prime}y_{n})\longrightarrow0$. Suppose, to the contrary, that
this is not the case. Then, after passing to a subsequence, there exists
$\varepsilon>0$ such that~$\left\vert f(x_{n}^{\prime}x_{n})g(y_{n}^{\prime
}y_{n})\right\vert \geq\varepsilon$, for each $n$. For each $n\neq m$, either
$x_{n}\perp x_{m}$ or $y_{n}\perp y_{m}$. By Ramsey's theorem, there is an
infinite subsequence $(n_{k})$ such that either $(x_{n_{k}})$ is disjoint in
$A$ or $(y_{n_{k}})$ is disjoint in $B$. In the firt case, since
$f\in\mathcal{L}$-$\mathrm{wap}(A)$, $f(x_{n_{k}}^{\prime}x_{n_{k}%
})\longrightarrow0$. In the second case, since $g\in\mathcal{L}$%
-$\mathrm{wap}(B)$, $g(y_{n_{k}}^{\prime}y_{n_{k}})\longrightarrow0$. In
either case, $\ f(x_{n_{k}}^{\prime}x_{n_{k}})g(y_{n_{k}}^{\prime}y_{n_{k}%
})\longrightarrow0$, contradicting the choice of the subsequence. Hence
$f\otimes g\in\mathcal{L}$-\textrm{$wap($}$A\otimes_{\left\vert \pi\right\vert
}B)$.

\item Since $g_{/B^{2}}\neq0$, there exists $0\leq u,v\in B_{B}$ such that
$g(uv)\neq0$. Let $(x_{n})\subset B_{A}$ be a disjoint sequence and
$(y_{n})\subset$ $B_{A}$. Consider the sequences $(x_{n},u)$ and $(y_{n},v)$
in $A\times B$. Since $x_{n}\perp x_{m}$ for $n\neq m$, $(x_{n},u)$ is
positive sporadically disjoint. Hence, by the $L$-weakly almost periodicity of
$f\otimes g$,
\[
(f\otimes g)(y_{n}x_{n}\otimes vu)\longrightarrow0\text{.}%
\]
Thus $f(y_{n}x_{n})g(uv)\longrightarrow0$. It follows that $f(y_{n}%
x_{n})\longrightarrow0$, so $f\in\mathcal{L}$-$\mathrm{wap}(A)$.

\item Let $(a_{n}),$ $(a_{m}^{\prime})\subset B_{A}$ be such that the iterated
limits $\lim\limits_{m}\lim\limits_{n}f(a_{n}a_{m}^{\prime})=\alpha$ and
$\lim\limits_{n}\lim\limits_{m}f(a_{n}a_{m}^{\prime})=\beta$, exist. Choose
$u,v\in B$ with $g(uv)\neq0$. Define $u_{n}=a_{n}\otimes u$ and $v_{m}%
=a_{m}\otimes v$. Then $(u_{n})$ and $(v_{m})$ are bounded in $A\otimes
_{\left\vert \pi\right\vert }B$ . Consequently
\begin{align*}
\lim_{m}\lim_{n}(f\otimes g)(u_{n}v_{n})  &  =\alpha g(uv)\text{ and }\\
\lim_{n}\lim_{m}(f\otimes g)(u_{n}v_{m})  &  =\beta g(uv)\text{.}%
\end{align*}
Since $f\otimes g$ is wap, the two iterated limits coincide. Hence
$\alpha=\beta$, and therefore $f\in$\textrm{$wap$}$(A)$.
\end{enumerate}
\end{proof}

\begin{remark}
The assumption $g_{/B^{2}}\neq0$ in (2) and (3) is necessary. Indeed, if
$g_{/B^{2}}=0$ then $(f\otimes g)(uv)=0$ for all $u,v\in A\otimes_{\left\vert
\pi\right\vert }B$. Consequently, $f\otimes g$ is automatically both
$L$-weakly almost periodic and weakly almost periodic, for every $f\in
A^{\prime}$, regardless of whether $f$ itself belongs to $\mathcal{L}%
$-$\mathrm{wap}(A)$ or $\mathrm{wap}(A)$.
\end{remark}

The next result shows that, apart from the trivial case, Arens regularity of
the Fremlin projective tensor product forces regularity of each factor.

\begin{theorem}
\label{F}Let $A$ and $B$ be non trivial Banach lattice algebras.

\begin{enumerate}
\item If $\mathcal{L}$-\textrm{$wap$}$(A\otimes_{\left\vert \pi\right\vert
}B)=(A\otimes_{\left\vert \pi\right\vert }B)^{\prime}$ then $\mathcal{L}%
$-\textrm{$wap$}$(A)=A^{\prime}$ and $\mathcal{L}$-\textrm{$wap$%
}$(B)=B^{\prime}$.

\item If \textrm{$wap$}$(A\otimes_{\left\vert \pi\right\vert }B)=(A\otimes
_{\left\vert \pi\right\vert }B)^{\prime}$ then \textrm{$wap$}$(A)=A^{\prime}$
and \textrm{$wap$}$(B)=B^{\prime}$.
\end{enumerate}

In particular, if $A\otimes_{\left\vert \pi\right\vert }B$ is Arens regular,
then both $A$ and $B$ are Arens regular.
\end{theorem}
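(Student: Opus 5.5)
The plan is to reduce both parts to Proposition \ref{PPP}, using elementary tensors $f\otimes g$. Since $B$ is nontrivial, $B^{2}\neq0$, so there exist $u,v\in B$ with $uv\neq0$. By Hahn--Banach there is $g\in B^{\prime}$ with $g(uv)\neq0$, that is, $g_{/B^{2}}\neq0$.

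For part (1), fix an arbitrary $f\in A^{\prime}$. Then $f\otimes g$ is a continuous functional on $A\otimes_{\left\vert \pi\right\vert }B$. This follows from the universal property (UP) applied to the bounded bilinear form $(x,y)\mapsto f(x)g(y)$ with values in $\mathbb{R}$, which gives $\left\Vert f\otimes g\right\Vert \leq\left\Vert f\right\Vert \left\Vert g\right\Vert $. By hypothesis, $f\otimes g\in\mathcal{L}$-\textrm{wap}$(A\otimes_{\left\vert \pi\right\vert }B)$. Proposition \ref{PPP}(2) then gives $f\in\mathcal{L}$-\textrm{wap}$(A)$, so $\mathcal{L}$-\textrm{wap}$(A)=A^{\prime}$.

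For $B$, the argument is symmetric. Choose $f_{0}\in A^{\prime}$ with $f_{0/A^{2}}\neq0$, which is possible because $A$ is nontrivial. Then apply the mirror version of Proposition \ref{PPP}(2) to $f_{0}\otimes g$ with $g\in B^{\prime}$ arbitrary. The proof of that mirror version is the same as the original with the roles of the two factors swapped: take $u,v\in B_{A}^{+}$ with $f_{0}(uv)\neq0$ and use the sporadically disjoint sequence $(u,y_{n})$, which is sporadically disjoint whenever $(y_{n})$ is disjoint.

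One detail in Proposition \ref{PPP}(2) needs checking: there $u,v$ were taken positive. Since $g(uv)\neq0$ for some $u,v\in B$, splitting $u=u^{+}-u^{-}$ and $v=v^{+}-v^{-}$ and using bilinearity shows that some product of positive parts has nonzero $g$-value. Normalizing then gives positive $u,v$ in the unit ball with $g(uv)\neq0$.

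Part (2) follows the same route with Proposition \ref{PPP}(3) in place of (2), together with its mirror version for the second factor. The mirror proof uses the sequences $u\otimes b_{n}$ and $v\otimes b_{m}^{\prime}$ and the double limit criterion. The closing "in particular" statement then follows from the characterization that a Banach algebra is Arens regular if and only if \textrm{wap}$=$ dual (\cite[Theorem 1.4.11]{Palmer}), applied both to $A\otimes_{\left\vert \pi\right\vert }B$ and to each factor.

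The main obstacle is modest: the symmetric versions of Proposition \ref{PPP}(2),(3) are not stated explicitly and must be justified by symmetry. For (3) this also means making sure that $a_{n}\otimes u$ is bounded by $\left\Vert a_{n}\right\Vert \left\Vert u\right\Vert $. That bound holds because $\left\vert a\otimes u\right\vert =\left\vert a\right\vert \otimes\left\vert u\right\vert $ and by the infimum formula for $\left\Vert \cdot\right\Vert _{\left\vert \pi\right\vert }$.
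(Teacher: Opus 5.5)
Your proposal is correct and is essentially the paper's proof: for arbitrary $f\in A^{\prime}$ you choose $g\in B^{\prime}$ with $g_{/B^{2}}\neq0$, apply Proposition \ref{PPP}(2) (resp.\ (3)) to $f\otimes g$, treat $B$ by the symmetric argument, and deduce the final claim from the equivalence between Arens regularity and $\mathrm{wap}=$ dual. Your extra checks (positive $u,v$ via $u=u^{+}-u^{-}$, $v=v^{+}-v^{-}$, the mirror versions of Proposition \ref{PPP}, and the cross-norm bound) fill in details the paper leaves implicit. One small correction: continuity of $f\otimes g$ should be justified by noting that $(x,y)\mapsto f(x)g(y)$ is a difference of positive bilinear forms, because Fremlin's universal property concerns positive (regular) bilinear maps rather than arbitrary continuous ones.
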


\begin{proof}
\ \ \ \ \ \ \ \ \ \ \ \ \ \ \ 

\begin{enumerate}
\item Assume that $\mathcal{L}$-\textrm{$wap$}$(A\otimes_{\left\vert
\pi\right\vert }B)=(A\otimes_{\left\vert \pi\right\vert }B)^{\prime}$. Let
$f\in A^{\prime}$. Since $B$ is nontrivial, $B^{2}\neq\left\{  0\right\}  $.
Hence, there exists $g\in B^{\prime}$ such that $g_{/B^{2}}\neq0$. By
proposition \ref{PPP}, applied to $f\otimes g$, we obtain $f\in\mathcal{L}%
$-\textrm{$wap$}$(A)$. A symmetric argument shows that $\mathcal{L}%
$-\textrm{$wap$}$(B)=B^{\prime}$.

\item The proof is identical.
\end{enumerate}

Finally, if $A\otimes_{\left\vert \pi\right\vert }B$ is Arens regular, then
\textrm{$wap$}$(A\otimes_{\left\vert \pi\right\vert }B)=(A\otimes_{\left\vert
\pi\right\vert }B)^{\prime}$. Therefore, by part (2) \textrm{$wap$%
}$(A)=A^{\prime}$ and \textrm{$wap$}$(B)=B^{\prime}$, so $A$ and $B$ are Arens regular.
\end{proof}

The following example presents two Arens regular Banach lattice algebras whose
Fremlin projective tensor product fails to be Arens regular.

\begin{example}
Consider the unitization of the Banach $f$-algebra $\ell^{1}$, that is
$A=\ell^{1}\oplus\mathbb{R}1$, where $1=(1,1,...)$, with the coordinatewise
order, multiplication
\[
(x+\alpha1)(y+\beta1)=xy+\alpha x+\beta y+\alpha\beta1.
\]
and norm $\left\Vert x+\alpha\right\Vert =\left\Vert x\right\Vert
_{1}+\left\vert \alpha\right\vert $. Then $A$ is a commutative unital Banach
lattice algebra. Since \textrm{$wap$}$(A)=$\textrm{$wap$}$(\ell^{1})\oplus
$\textrm{$wap$}$(\mathbb{R})=A$, $A$ is Arens regular. The Banach $f$-algebra
$c_{0}$ is also Arens regular. We now show that $A\otimes_{\left\vert
\pi\right\vert }c_{0}$ is not Arens regular.

Define the bilinear map $\varphi:A\times c_{0}\longrightarrow\mathbb{R}$ by
\[
\varphi(x+\alpha1,g)=\sum\limits_{k=1}^{\infty}x_{k}g_{k}\text{ for all
}x+\alpha1\in A\text{ and }g\in c_{0}\text{,}%
\]
since $\left\vert \varphi(x+\alpha,g)\right\vert \leq\left\Vert g\right\Vert
_{\infty}\left\Vert x\right\Vert _{1}\leq\left\Vert x+\alpha\right\Vert
\left\Vert g\right\Vert _{\infty}$, $\varphi$ induces a continuous linear
functional, still denoted by $\varphi$, on $A\otimes_{\left\vert
\pi\right\vert }c_{0}$, that is
\[
\varphi((x+\alpha1)\otimes g)=\sum\limits_{k=1}^{\infty}x_{k}g_{k}\text{ for
all }x+\alpha1\in A\text{ and }g\in c_{0}\text{,}%
\]
For $n\geq1$, define
\[
e_{n}=(0,0,..,0,\overset{n}{\overbrace{1}},0,..)\text{, }x_{n}=e_{n}+1\in
A\text{ and }g_{n}=(\overset{n\text{ terms}}{\overbrace{1,1,..1},}0,0..)\in
c_{0}.
\]
and set $u_{n}=\dfrac{x_{n}}{2}\otimes e_{n}\in A\otimes_{\left\vert
\pi\right\vert }c_{0}$ and $v_{m}=1\otimes g_{m}\in A\otimes_{\left\vert
\pi\right\vert }c_{0}$ for all $n,m\geq1$. Since $\left\Vert u_{n}\right\Vert
_{\left\vert \pi\right\vert }=\left\Vert \dfrac{x_{n}}{2}\right\Vert
\left\Vert e_{n}\right\Vert _{\infty}=1$ and $\left\Vert v_{m}\right\Vert
_{\left\vert \pi\right\vert }=\left\Vert 1\right\Vert \left\Vert
g_{m}\right\Vert _{\infty}=1$, the sequences $(u_{n})$ and $(v_{m})$ belong to
$B_{A\otimes_{\left\vert \pi\right\vert }c_{0}}$. Moreover,
\begin{align*}
\varphi(u_{n}v_{m}) &  =\varphi(\dfrac{x_{n}}{2}.1\otimes e_{n}g_{m}%
)=\varphi(\frac{1}{2}(e_{n}+1)\otimes e_{n}g_{m})\\
&  =\frac{1}{2}\sum\limits_{k=1}^{\infty}e_{n,k}^{2}g_{m,k}\\
&  =%
\genfrac{\{}{.}{0pt}{0}{0\text{ if }n>m}{\frac{1}{2}\text{ if }n\leq m}%
\end{align*}
Thus, we obtain
\[
\lim_{n}\lim_{m}\varphi(u_{n}v_{m})=\frac{1}{2}\text{ and }\lim_{m}\lim
_{n}\varphi(u_{n}v_{m})=0\text{.}%
\]
The iterated limits are different. Hence, $\varphi$ is not weakly almost
periodic, and consequently, $A\otimes_{\left\vert \pi\right\vert }c_{0}$ fails
to be Arens regular.
\end{example}

Recall that every linear functional $f\in(A\otimes_{\left\vert \pi\right\vert
}B)^{\prime}$ induces a regular operator $T_{f}:A\longmapsto B^{\prime}$
defined by
\[
T_{f}(x)(y)=f(x\otimes y),x\in A,y\in B.
\]
The mapping $T:f\longmapsto T_{f}$ is a surjective Banach lattice isometry
from $(A\otimes_{\left\vert \pi\right\vert }B)^{\prime}$ onto $\mathcal{L}%
^{r}(A,B^{\prime})$ (see \cite[1.I]{Fremlin}). We now give several conditions
ensuring the Arens regularity of the Fremlin projective tensor product.

\begin{theorem}
Let $A$ and $B$ be a Banach lattice algebras. Then the following hold.

\begin{enumerate}
\item If $A^{\prime}$ and $B^{\prime}$ have the PSP, then $A\otimes
_{\left\vert \pi\right\vert }B$ is Arens regular.

\item If $\mathcal{L}^{r}(A,B^{\prime})$ has an order continuous norm, then
$A\otimes_{\left\vert \pi\right\vert }B$ , $A$ and $B$ are Arens regular.

\item If $A$ and $B$ are Banach $f$-algebras, then $A\otimes_{\left\vert
\pi\right\vert }B$ is Arens regular.
\end{enumerate}
\end{theorem}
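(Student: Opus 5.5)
Part (1) goes through Proposition \ref{KK} and the $f\otimes g$ argument of Proposition \ref{PPP}. By Theorem \ref{Main}(2), the hypothesis that $A^{\prime}$ and $B^{\prime}$ have the PSP gives $\mathcal{L}$-\textrm{wap}$(A)=A^{\prime}$ and $\mathcal{L}$-\textrm{wap}$(B)=B^{\prime}$ for every product. Take $f\in(A\otimes_{\left\vert \pi\right\vert }B)^{\prime}$, a positive sporadically disjoint sequence $(x_{n},y_{n})$ in $B_{A\times B}$, and any $(x_{n}^{\prime},y_{n}^{\prime})$ in $B_{A\times B}$. By (3) of Proposition \ref{KK} it suffices to show $f(x_{n}^{\prime}x_{n}\otimes y_{n}^{\prime}y_{n})\to0$. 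Argue by contradiction and use Ramsey's theorem as in Proposition \ref{PPP}: after passing to a subsequence, either $(x_{n})$ is disjoint in $A$ or $(y_{n})$ is disjoint in $B$. Say $(x_{n})$ is disjoint. Write $f(x_{n}^{\prime}x_{n}\otimes y_{n}^{\prime}y_{n})=T_{f}(x_{n}^{\prime}x_{n})(y_{n}^{\prime}y_{n})$. The aim is to show that $\sup_{\Vert b\Vert\le1}\Vert T_{f}(\cdot\,x_{n})(b)\Vert\to0$. Since $B^{\prime}$ has the PSP, $\mathcal{L}^{r}(A,B^{\prime})$ should inherit enough order continuity that $\Vert T_{f}\circ(\cdot\, x_{n})\Vert\to0$. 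Concretely, I would first show that $x_{n}\to0$ weakly (because $A^{\prime}$ has order continuous norm and $(x_{n})$ is bounded disjoint), and then that $T_{f}$ maps the sets $\{x^{\prime}x_{n}\}$ to weakly null, and hence norm null, families in $B^{\prime}$.

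This uniformity step is the main obstacle. I would try to reduce it to the case $f\geq0$. In that case $|f(x^{\prime}x_{n}\otimes y^{\prime}y_{n})|\le f(|x^{\prime}|x_{n}\otimes|y^{\prime}|y_{n})$, and one can apply the positive-case version of Proposition \ref{P1}(3) to the functionals $a\mapsto f(a\otimes b)$, with the constant uniform in $b$ via a disjointness/diagonal argument. If that fails, the fallback is to cite Buskes' bilinear Arens regularity result together with the fact that the orbit lies in $\mathcal{L}^{r}(A,B^{\prime})$.

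Part (2): $(A\otimes_{\left\vert \pi\right\vert }B)^{\prime}\cong\mathcal{L}^{r}(A,B^{\prime})$ is a surjective lattice isometry, so the dual of $A\otimes_{\left\vert \pi\right\vert }B$ has order continuous norm. Theorem \ref{Main}(1) then gives Arens regularity of $A\otimes_{\left\vert \pi\right\vert }B$ for the tensor product algebra structure. Theorem \ref{F} then yields Arens regularity of $A$ and $B$, since both are assumed nontrivial in this section.

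Part (3): use a unitization/embedding argument mirroring the proposition on $f$-algebras. First I would check that $A\otimes_{\left\vert \pi\right\vert }B$ is itself a Banach $f$-algebra. Disjointness of elementary positive tensors is sporadic disjointness by the lemma, and the $f$-algebra identity $(a\otimes b)h\wedge g=0$ is checked on elementary tensors using the lemma and the $f$-property of $A$ and $B$; it then extends by density of the Riesz tensor product $A\overline{\otimes}B$ and continuity of lattice operations. The earlier proposition that every Banach $f$-algebra satisfies \textrm{wap}$=A^{\prime}$ then concludes. The delicate point is passing from elementary tensors to general positive $h$ in the Riesz tensor product. For this I would use that every element of $A\overline{\otimes}B$ is dominated by finite sums of positive elementary tensors, together with the fact that bands in $f$-algebras are ring ideals.
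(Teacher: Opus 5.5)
Your part (2) is the paper's argument: Theorem \ref{Main}(1) applied to $A\otimes_{|\pi|}B$, whose dual is lattice isometric to $\mathcal{L}^{r}(A,B^{\prime})$, and then Theorem \ref{F} for the factors. Parts (1) and (3), however, are not yet proofs.

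\textbf{Part (1).} The paper does not use Proposition \ref{KK} here. It quotes [Pedro, Theorem 1] to get that $(A\otimes_{|\pi|}B)^{\prime}=\mathcal{L}^{r}(A,B^{\prime})$ itself has the PSP, and then applies Theorem \ref{Main}(2) to the algebra $A\otimes_{|\pi|}B$.

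Your remark that $\mathcal{L}^{r}(A,B^{\prime})$ ``should inherit enough order continuity'' is exactly that external theorem, stated without proof. The step you call the main obstacle stays open. The Buskes fallback does not help, since it needs order continuity of $(A\otimes_{|\pi|}B)^{\prime}$, which is what is at stake. Your sketch also targets the wrong quantity: $x_{n}\to0$ weakly is not enough, because the left factor $x_{n}^{\prime}$ moves with $n$.

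Your route can still be completed, and no uniformity over sets is needed, because condition (3) of Proposition \ref{KK} is already sequential.
\begin{itemize}
\item By $|f(x_{n}^{\prime}x_{n}\otimes y_{n}^{\prime}y_{n})|\le|f|(|x_{n}^{\prime}|x_{n}\otimes|y_{n}^{\prime}|y_{n})$, you may assume $f\ge0$, $x_{n}^{\prime}\ge0$ and $y_{n}^{\prime}\ge0$.
\item In the Ramsey case where $(x_{n})$ is disjoint, each $\psi\in A^{\prime}$ is $L$-wap. So Proposition \ref{P1}(2) gives $|\psi(x_{n}^{\prime}x_{n})|\le\Vert\psi\circ x_{n}\Vert\to0$.
\item Hence $T_{f}(x_{n}^{\prime}x_{n})$ is positive and weakly null in $B^{\prime}$, so it is norm null by the PSP of $B^{\prime}$. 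Then $f(x_{n}^{\prime}x_{n}\otimes y_{n}^{\prime}y_{n})\le\Vert T_{f}(x_{n}^{\prime}x_{n})\Vert\,\Vert y_{n}^{\prime}y_{n}\Vert\to0$.
\item If instead $(y_{n})$ is disjoint, use the operator $y\mapsto f(\cdot\otimes y)$ from $B$ into $A^{\prime}$ and the PSP of $A^{\prime}$.
\end{itemize}
Completed this way, your proof avoids the external citation but rests on the elementary-tensor reduction in Proposition \ref{KK}, and it only gives $\mathcal{L}$-\textrm{wap}$(A\otimes_{|\pi|}B)=(A\otimes_{|\pi|}B)^{\prime}$. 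The paper's proof is shorter and yields the stronger fact that the dual of the tensor product has the PSP.

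\textbf{Part (3).} Your overall plan agrees with the paper, which simply cites [Jaber, Theorem 4.2] for the $f$-algebra property of $A\otimes_{|\pi|}B$. Your direct verification breaks where you check the identity on elementary tensors and ``extend by density.''

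A disjoint pair such as $u=(a\otimes b-c\otimes d)^{+}$, $v=(a\otimes b-c\otimes d)^{-}$ in $A\overline{\otimes}B$ is not a limit of sporadically disjoint elementary tensors. So the elementary case says nothing about it. Domination by sums of elementary tensors only controls the multiplier. The claim that ``bands are ring ideals'' is either about $A$ and $B$, which is insufficient, or about the tensor product, which is circular.

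What works is to trade disjointness for an order inequality, which does survive domination and density.
\begin{itemize}
\item For $0\le a\in A$, multiplication $\pi_{a}$ lies in $\mathcal{Z}(A)$ (as the paper itself uses), so $0\le\pi_{a}\le\lambda I_{A}$. Likewise $0\le\pi_{b}\le\mu I_{B}$.
\item Multiplication by $a\otimes b$ is $\pi_{a}\otimes\pi_{b}$, and $\lambda\mu I-\pi_{a}\otimes\pi_{b}=(\lambda I_{A}-\pi_{a})\otimes\mu I_{B}+\pi_{a}\otimes(\mu I_{B}-\pi_{b})\ge0$.
\item Domination then gives $0\le wu\le\lambda_{w}u$ for $0\le w\in A\overline{\otimes}B$ and $u\ge0$. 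Hence $u\wedge v=0$ implies $wu\wedge v=0$.
\item Norm continuity of the product and of the lattice operations carries this to the completion.
\end{itemize}
Alternatively, cite Jaber's theorem, as the paper does.
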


\begin{proof}
\ \ 

\begin{enumerate}
\item Assume that $A^{\prime}$ and $B^{\prime}$ have the PSP. By \cite[Theorem
1]{Pedro}, it follows that $\mathcal{L}^{r}(A,B^{\prime})=(A\otimes
_{\left\vert \pi\right\vert }B)^{\prime}$ has the PSP. Hence, by Theorem
\ref{Main}, $\mathcal{L}$-\textrm{$wap($}$A\otimes_{\left\vert \pi\right\vert
}B)=(A\otimes_{\left\vert \pi\right\vert }B)^{\prime}$ which implies that
$A\otimes_{\left\vert \pi\right\vert }B$ is Arens regular.

\item This follows from Theorem \ref{Main}.

\item If $A$ and $B$ are Banach $f$-algebras, then by \cite[Theorem
4.2]{Jaber}, the Fremlin projective tensor product $A\otimes_{\left\vert
\pi\right\vert }B$ is again a Banach $f$-algebra. Since every Banach
$f$-algebra is Arens regular, the result follows.
\end{enumerate}
\end{proof}

We conclude this paper with a final remark concerning the relationship between
the projective and the Fremlin projective tensor products of Banach lattice algebras.

\begin{remark}
Let $A$ and $B$ be Banach lattice algebras. Their projective tensor product
$A\otimes_{\pi}B$ is a Banach algebra. According to \cite[Remark 3.4]{Jaber},
there exists a unique continuous algebra homomorphism $\theta:A\otimes_{\pi
}B\longrightarrow A\otimes_{\left\vert \pi\right\vert }B$ which extends the
identity on the algebraic tensor product $A\otimes B$. A natural question
arises about the injectivity of $\theta$.

We show that, in general, the answer is negative. Indeed, let $K$ and
$K^{\prime}$ be compact Hausdorff spaces, each containing a perfect set (that
is, a closed subset without isolated points). Set $A=C(K)$ and $B=C(K^{\prime
})$. Since $A\otimes_{\left\vert \pi\right\vert }B$ is a Banach $f$-algebra,
it is Arens regular. Suppose, contrary to the claim, that $\theta$ is
injective. Then $A\otimes_{\pi}B$ would be isomorphic, as a Banach algebra, to
the subalgebra $\theta(A\otimes_{\pi}B)$ of the Arens regular algebra
$A\otimes_{\left\vert \pi\right\vert }B$. Consequently, $A\otimes_{\pi}B$
would be Arens regular. However, by \cite[Theorem 6.11]{Lij}, $C(K)\otimes
_{\pi}C(K^{\prime})$ is not Arens regular under the above assumptions on $K$
and $K^{\prime}$. This contradiction shows that $\theta$ cannot be injective.
\end{remark}

\end{document}